\documentclass[11pt]{amsart}

\usepackage[a4paper,hmargin=3cm,vmargin=3cm]{geometry}
\usepackage{amsfonts,amssymb,amscd,amstext}
\usepackage{graphicx}
\usepackage[dvips]{epsfig}
\usepackage[latin1]{inputenc}

%% PARA MODIFICAR LAS CABECERAS Y LOS PIES
\usepackage{fancyhdr}
\pagestyle{fancy}
\fancyhf{}
%% EDITANDO LAS CABECERAS
%\renewcommand{\headrulewidth}{0pt}

%% Tipo de Letra
%\usepackage{palatino}
\usepackage{times}

\usepackage{enumerate}
\usepackage{titlesec}
\usepackage{mathrsfs}
\usepackage{stmaryrd}

%\usepackage{showkeys}

%% Para que no corte las palabras
\pretolerance=2000
\tolerance=3000

\def\R{\mathbb{R}}

\def\C{\mathbb{C}}

\newcommand{\ben}{\begin{enumerate}}
\newcommand{\bit}{\begin{itemize}}
\newcommand{\een}{\end{enumerate}}
\newcommand{\eit}{\end{itemize}}

\newcommand{\ed}{\end{document}}

\def\cA{\mathcal{A}}
\def\cU{\mathcal{U}}

\def\cD{\mathcal{D}}
\def\cR{\mathcal{R}}

\def\cW{\mathcal{W}}

\def\cV{\mathcal{V}}

\def\cH{\mathcal{H}}
\def\cL{\mathcal{L}}

\def\cN{\mathcal{N}}
\def\cK{\mathcal{K}}

%%%%%%%%%%%%%%%%%%%%%%%%%%%%%%%%%%%%%%%%%%%%%%%%%%%%%%%%%%%%%%%%%%%%%%%%%
%%%% NUEVOS COMANDOS PARA LATEX
\let\8=\infty \let\0=\emptyset

\let\landa=\lambda
\let\alfa=\alpha

\let\parc=\partial

\def\ep{\varepsilon}

\def\landa{\lambda}

\def\flecha{\rightarrow}
\def\esiz{\langle}
\def\esde{\rangle}

\def\S{\Sigma}

\def\cte.{\mathop{\rm cte.}\nolimits}

\def\R{\mathbb{R}}

\def\C{\mathbb{C}}

\def\H{\mathbb{H}}
\def\S{\mathbb{S}}

\def\X{\mathfrak{X}}

\newfont{\bb}{msbm10 at 12pt}

%\def\c{\hbox{\bb C}}

%\def\S{\Sigma}

% MÃ¡rgenes

\headheight=15.03pt
\headsep 0.5cm
\topmargin 0.5cm
\textheight = 49\baselineskip
\textwidth 15cm
\oddsidemargin 0.5cm
\evensidemargin 0.5cm

\setlength{\parskip}{0.5em}

%% CAMBIANDO EL ASPECTO DE LAS SECCIONES
\titleformat{\section}%[display]
{\filcenter\bfseries\large} {\thesection{.}}{0.2cm}{}%[$\vspace*{-1.0cm}$]
%%%%%%%%%%%%%%%%%%%%%%%%%%%%%%%%%%%%%%%%%%%%%%%%%%%%%%
%% CAMBIANDO EL ASPECTO DE LAS SUBSECCIONES
\titleformat{\subsection}[runin]
{\bfseries} {\thesubsection{.}}{0.15cm}{}[.]
%%%%%%%%%%%%%%%%%%%%%%%%%%%%%%%%%%%%%%%%%%%%%%%%%%%%%%
%% CAMBIANDO EL ASPECTO DE LAS SUBSUBSECCIONES
\titleformat{\subsubsection}[runin]
{\em}{\thesubsubsection{.}}{0.15cm}{}[.]
%%%%%%%%%%%%%%%%%%%%%%%%%%%%%%%%%%%%%%%%%%%%%%%%%%%%%%

%% Caption de las figuras
\usepackage[up,bf]{caption}

\newtheorem{theorem}{Theorem}[section]
\newtheorem{lemma}[theorem]{Lemma}

\newtheorem{remark}[theorem]{Remark}
\newtheorem{corollary}[theorem]{Corollary}

\theoremstyle{definition}
%\newtheorem{remark}{Remark}[section]
%\newtheorem{remark}[theorem]{Remark}
%\newtheorem{example}[theorem]{Example}
%\renewcommand{\theexample}{\!}
%\newtheorem{hacer}{Hacer:}
%\renewcommand{\theproblem}{\!\!}

%\newtheorem{definition}{Definition}[section] %\renewcommand{\thedefinition}{}
%\newtheorem{definition}[theorem]{Definition}

%\theoremstyle{remark}

 % Para poner sÃ­mbolos en los footnotes

\numberwithin{equation}{section}
\numberwithin{figure}{section}

\usepackage{color}

%%%%%%%%%%
%%%%%%%%%%
%%%%%%%%%%
%%%%%%%%%% BEGIN DOCUMENT
%%%%%%%%%%
%%%%%%%%%%

\begin{document}
\fancyhead[LO]{A quasiconformal Hopf soap bubble theorem}
\fancyhead[RE]{José A. Gálvez, Pablo Mira, Marcos P. Tassi}
\fancyhead[RO,LE]{\thepage}

\thispagestyle{empty}

%% Title
%\vspace*{7mm}
\begin{center}
{\bf \LARGE A quasiconformal Hopf soap bubble theorem}
\vspace*{5mm}

%% Authors
\hspace{0.2cm} {\Large José A. Gálvez, Pablo Mira, Marcos P. Tassi}
\end{center}

%% Addresses and finantial support
\footnote[0]{%\vspace*{-0.4cm} 
\noindent \emph{Mathematics Subject Classification}: 53A10, 53C42, 35J60. \\ \mbox{} \hspace{0.25cm} \emph{Keywords}: Constant mean curvature, quasiconformal Gauss map, Hopf theorem, index theory, uniqueness, elliptic equations, Bers-Nirenberg representation, Alexandrov conjecture}
%}

%\footnote[0]{\vspace*{-0.4cm} \emph{Mathematics Subject Classification}: 35J25, 49Q05, 53A10.}

%% Abstract, keywords, and MSC

\vspace*{7mm}

\begin{quote}
{\small
\noindent {\bf Abstract}\hspace*{0.1cm}
We show that any compact surface of genus zero in $\R^3$ that satisfies a quasiconformal inequality between its principal curvatures is a round sphere. This solves an old open problem by H. Hopf, and gives a spherical version of Simon's quasiconformal Bernstein theorem. The result generalizes, among others, Hopf's theorem for constant mean curvature spheres, the classification of round spheres as the only compact elliptic Weingarten surfaces of genus zero, and the uniqueness theorem for ovaloids by Han, Nadirashvili and Yuan. The proof relies on the Bers-Nirenberg representation of solutions to linear elliptic equations with discontinuous coefficients.

%This result also holds in the anisotropic setting, giving a characterization of Wulff shapes. 

%\vspace*{0.1cm}
%\noindent{\bf Keywords}\hspace*{0.1cm} Complex curves.

\vspace*{0.1cm}

%\noindent{ MSC (2010): 49Q05, 53A10.}\hspace*{0.1cm}
}
\end{quote}

%%%%%%%%%%
%%%%%%%%%%
%%%%%%%%%%
%%%%%%%%%% INTRODUCTION
%%%%%%%%%%
%%%%%%%%%%

\section{Introduction}

Hopf's soap bubble theorem is one of the fundamental theorems of constant mean curvature (CMC) theory. It states that any CMC sphere immersed in $\R^3$ is a round sphere \cite{Ho}; here, by a \emph{sphere} or \emph{immersed sphere}, we mean a smooth, compact orientable surface of genus zero immersed in $\R^3$. Hopf's soap bubble theorem has been extended to several important geometric situations, such as elliptic Weingarten spheres in $\R^3$ (\cite{Ho,Ch,HW,Br,GM3}), or CMC spheres in Riemannian homogeneous $3$-manifolds (\cite{Al,Ca,Ch2,AR1,AR2,DM,Me,MMPR1,MMPR2}, the final classification being obtained in \cite{MMPR2}).

In this paper we prove a quasiconformal extension of Hopf's theorem. In it, we replace the CMC hypothesis by the general quasiconformal inequality 
\begin{equation}\label{cua0}
(H-c)^2 \leq \mu (H^2-K),
\end{equation}
where $\mu,c$ are constants, with $\mu<1$, and $H,K$ denote the mean and Gaussian curvatures of the surface. Obviously, when $\mu=0$ we obtain the CMC condition $H=c$, but the case $\mu\in (0,1)$ models a much more general class of surfaces. Condition \eqref{cua0} has its origins in some classical problems of surface theory considered, among others, by Alexandrov, Hopf, Pogorelov, Osserman, Simon or Schoen, that we explain next.

When $c=0$, inequality \eqref{cua0} corresponds to the property that the Gauss map of the surface is quasiconformal, and this defines a well-known class of surfaces. They were classically introduced by Finn \cite{Fi} (for the case of graphs), and by Osserman \cite{Os}, who called them \emph{quasiminimal surfaces}. The problem of determining which properties of minimal surfaces remain true in the quasiconformal setting of \eqref{cua0} has been deeply studied, see e.g. \cite{Fi,Os,Sim,SS} and also \cite{GT}. Of special importance is the following quasiconformal Bernstein theorem, by L. Simon \cite{Sim}: \emph{planes are the only entire graphs with quasiconformal Gauss map}.

Aligned with the classical \emph{quasiminimal} terminology, we define a \emph{quasi-CMC surface} as a smooth ($C^{\8}$) immersed surface in $\R^3$ satisfying \eqref{cua0} for some values of $c,\mu$; here the term \emph{quasi} refers to the quasiconformal nature of inequality \eqref{cua0}. Obviously, quasiminimal surfaces are quasi-CMC.

An equivalent way of writing \eqref{cua0} is 
 \begin{equation}\label{cuasi}
(\kappa_1-c)^2 +(\kappa_2-c)^2 \leq 2\Lambda\, (\kappa_1-c)(\kappa_2-c),
\end{equation}
where $\Lambda\leq -1$, $c\in \R$ and $\kappa_1\geq \kappa_2$ are the principal curvatures of the surface. Here, $\Lambda=\frac{\mu+1}{\mu-1}$. In this equivalent form, \eqref{cuasi} appears as the uniformly elliptic case of Alexandrov's inequality \eqref{ecur1} (see Section \ref{sec:2}), and as an extension of Hopf's classical \emph{cusp property} {\bf (H)} that will be explained below. A useful visual interpretation of \eqref{cuasi} in terms of the \emph{curvature diagram} of the surface is given in Figure \ref{fig:wedges1}. 

\begin{figure}[htbp]
    \centering
    \includegraphics[width=0.9\textwidth]{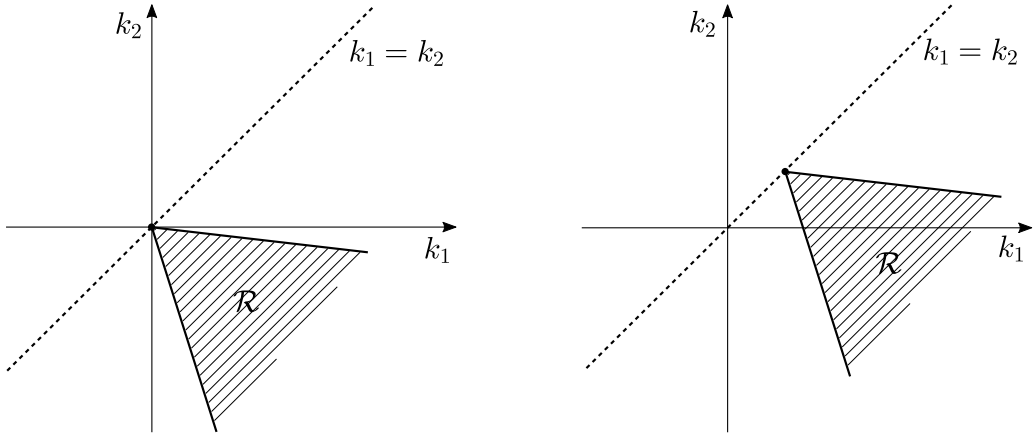}
     \caption{Curvature diagram regions for quasiminimal and quasi-CMC surfaces. A surface $\Sigma$ is quasiminimal (resp. quasi-CMC) if its curvature diagram $(\kappa_1,\kappa_2)(\Sigma)\subset \R^2$, $\kappa_1\geq \kappa_2$, lies in the wedge region $\cR$ in the left (resp. right). } 
\label{fig:wedges1}
\end{figure}

In general, the class of surfaces satisfying \eqref{cua0} is vast, and should not be expected to show many similarities with CMC surface theory. For instance, quasi-CMC surfaces do not satisfy the maximum principle, and Alexandrov's classical theorem for compact embedded CMC surfaces does not hold in the quasiconformal setting. Indeed, any thin enough tube around an arbitrary smooth Jordan curve of $\R^3$ is an embedded quasi-CMC torus.

In contrast, Theorem \ref{th:main1} below shows that Hopf's soap bubble theorem admits a general quasiconformal extension. It can be seen as a spherical version of Simon's quasiconformal Bernstein theorem \cite{Sim} stated above.

\begin{theorem}\label{th:main1}
Any quasi-CMC sphere immersed in $\R^3$ is a round sphere.
\end{theorem}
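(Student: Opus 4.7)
The plan is to mimic Hopf's classical proof for CMC spheres via the Hopf differential, replacing the holomorphicity of $Q$ by a Bers--Nirenberg factorization, which is the natural quasiconformal substitute.

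Since $\Sigma$ has genus zero, its induced conformal structure makes it biholomorphic to $\esf^2$. Working in local isothermal coordinates $z=u+iv$ with conformal factor $E>0$, I would consider the complex Hopf differential $Q\,dz^2$, for which
\[
H^2 - K = \frac{|Q|^2}{E^2}, \qquad Q_{\bar z} = E\,H_z,
\]
the second identity being the Codazzi equation in complex form. The quasi-CMC hypothesis \eqref{cua0} then reads $(H-c)^2 \leq \mu\,|Q|^2/E^2$ with $\mu<1$, and in particular every zero of $Q$ (i.e., every umbilic of $\Sigma$) satisfies $\kappa_1=\kappa_2=c$.

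The core analytic step is to establish a Beltrami-type inequality
\[
|Q_{\bar z}(z)| \leq \Phi(z)\,|Q(z)|, \qquad \Phi \in L^\infty_{\mathrm{loc}}(\Sigma),
\]
placing $Q$ within the scope of the Bers--Nirenberg representation for linear elliptic equations with bounded measurable coefficients. By the Codazzi identity this amounts to the pointwise bound $E\,|H_z|\leq \Phi\,|Q|$. The $C^0$ control $|H-c|\leq \sqrt{\mu}\,|Q|/E$ alone is insufficient, and I expect this step to be the main obstacle of the argument. The extra input that should enter is the uniform ellipticity of \eqref{cua0} (equivalently $\mu<1$, or $\Lambda\leq -1$), together with the fact that the curvature diagram of $\Sigma$ lies in the closed wedge of Figure \ref{fig:wedges1}, strictly separated from the umbilic diagonal; this is precisely the setting in which Bers--Nirenberg's theory for discontinuous coefficients becomes indispensable, since \eqref{cua0} is only an inequality and not an exact Weingarten equation, so the coefficients of the resulting first-order system for $Q$ will be merely $L^\infty$ rather than smooth.

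Granted the Beltrami inequality, the Bers--Nirenberg representation yields, on a neighborhood of every point of $\Sigma$, a factorization $Q(z) = e^{\omega(z)}\,\Phi(z)$ with $\omega$ continuous and $\Phi$ holomorphic. Hence either $Q\equiv 0$ on $\Sigma$, or the umbilic set is discrete and at each umbilic $Q$ has a well-defined finite multiplicity $m\geq 1$. In the latter case the principal line field of $\Sigma$, which is exactly the real foliation of $Q\,dz^2$, has an isolated singularity of index $-m/2<0$ at every umbilic. By the Poincar\'e--Hopf theorem for line fields on a closed surface, the sum of these indices must equal $\chi(\esf^2)=2>0$, which is incompatible with every singularity having strictly negative index. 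Therefore $Q\equiv 0$, so $\Sigma$ is totally umbilical in $\R^3$ and, being compact, is a round sphere.
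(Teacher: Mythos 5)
Your core analytic step --- the Beltrami-type inequality $|Q_{\bar z}|\leq \Phi\,|Q|$ with $\Phi\in L^{\infty}$ --- is not merely ``the main obstacle'': it cannot be derived from \eqref{cua0}, and in fact it is false in this generality. The Codazzi identity $Q_{\bar z}=E\,H_z$ involves a first derivative of $H$, whereas the quasi-CMC hypothesis is a purely pointwise, zeroth-order inequality between the curvatures; nothing in \eqref{cua0} prevents $|H_z|$ from being large where $|Q|$ is small, so no pointwise bound $E|H_z|\leq \Phi|Q|$ can be extracted. Worse, if such an inequality did hold near an umbilic, the similarity principle would give $Q=e^{s}(F\circ\chi)$ with $F$ holomorphic and $\chi$ a homeomorphism, hence a strictly positive winding of $Q$ around the umbilic and a strictly negative index $-m/2$ of the principal line fields there. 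But Lemma \ref{lem:eje1} in the Appendix exhibits a quasiminimal graph with an isolated umbilic of index \emph{zero}, so strictly negative indices --- and therefore the inequality you need --- fail for quasi-CMC surfaces. (Your final Poincar\'e--Hopf step would survive with merely non-positive indices, since $\chi(\S^2)=2>0$, but the route you propose to reach them does not.)

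The paper never applies Bers--Nirenberg to the Hopf differential. Instead it observes that \eqref{cuasi} is equivalent to saying that, near each umbilic, the surface satisfies \emph{some} uniformly elliptic Weingarten equation $\cW^{\tau}(\kappa_1,\kappa_2)=0$, where $\tau(q)\in[0,1]$ is a measurable, in general discontinuous, interpolation between the two linear relations $\kappa_2-c=m_i(\kappa_1-c)$. Since the osculating umbilical sphere $S_c$ satisfies the same equation, the height difference $h=u-u^{0}$ of the two graphs solves a \emph{linear}, uniformly elliptic equation with bounded measurable coefficients, and Bers--Nirenberg is applied to the complex gradient $h_z$ (Theorem \ref{lemanal2}) to obtain ${\rm det}(D^2h)<0$ on a punctured disk and the non-positivity of the index of $\nabla h_x,\nabla h_y$ --- the latter needing an additional maximum-principle argument beyond the representation formula. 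The index of the principal line fields is then identified with that of $\nabla h_y$ through the expansion \eqref{eq:a3}, yielding Theorem \ref{localth}, and Poincar\'e--Hopf concludes. To salvage your outline you would have to replace the Hopf differential by such a comparison function $h$ (a linearization against the umbilical sphere); trying to control $Q_{\bar z}$ directly from \eqref{cua0} cannot work.
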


\begin{remark}\label{rem:1}
The proof of Theorem \ref{th:main1} also holds if we only assume that \eqref{cuasi} is satisfied around the umbilics of the immersed sphere. Specifically, let $\Sigma$ be an immersed sphere in $\R^3$ such that, in a neighborhood of each umbilic $p\in \Sigma$, the inequality \eqref{cuasi} holds for $c:=\kappa_i(p)$, $i=1,2$, and for some $\Lambda=\Lambda(p)\leq -1$. Then $\Sigma$ is a round sphere. Here, no assumption is made on $\Sigma$ away from its umbilic set, and the values of the umbilicity constant $c$ and the quasiconformal coefficient $\Lambda$ are allowed to be different on different umbilics.
\end{remark}

Theorem \ref{th:main1} answers a question posed by H. Hopf in 1956 on the curvature diagram of immersed spheres in $\R^3$, that we explain next. Recall that the \emph{curvature diagram} of a surface $\Sigma$ is the region $\mathcal{D}:=\{(\kappa_1(p),\kappa_2(p)) : p \in \Sigma\}\subset \R^2$, where $\kappa_1\geq \kappa_2$ are the principal curvatures of $\Sigma$. Note that $\cD$ lies in the closed half-plane $x\geq y$ of $\R^2$, and the points where $\cD$ intersects the diagonal $y=x$ correspond to the umbilics of $\Sigma$. In particular, if $\Sigma$ is compact of genus zero, then $\cD$ intersects the diagonal (since $\Sigma$ must have some umbilic, by Poincaré-Hopf theorem).

In his famous book \cite{Ho}, Hopf proved that a compact, \emph{real analytic} surface of genus zero  $\Sigma$ in $\R^3$ must be a round sphere if its curvature diagram $\cD$ satisfies the following property ${\bf (H)}$: \emph{$\cD$ has cusps at the diagonal $y=x$, with tangents orthogonal to this diagonal}. That is, $\cD$ lies in a region of the half-plane $x\geq y$ that has such cusps at the diagonal (see Figure \ref{fig:Hopf}, left). He asked then if the analyticity condition can be removed, see \cite[p. 145]{Ho}. Our Theorem \ref{th:main1} together with Remark \ref{rem:1} gives a positive answer to Hopf's question:

\begin{corollary}[Solution to Hopf's problem]\label{cor:menosuno}
Any immersed sphere in $\R^3$ whose curvature diagram satisfies property ${\bf (H)}$ is a round sphere.
\end{corollary}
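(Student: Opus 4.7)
\medskip

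The plan is to deduce the corollary directly from Remark~\ref{rem:1}: what must be checked is that property~${\bf (H)}$ forces, in a neighborhood of every umbilic $p\in\Sigma$, the pointwise inequality \eqref{cuasi} with $c:=\kappa_i(p)$ and some $\Lambda(p)\leq -1$.

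First I would rewrite \eqref{cuasi} in rotated coordinates on the $(\kappa_1,\kappa_2)$-plane centered at $(c,c)$, setting
\begin{equation*}
u := \frac{\kappa_1+\kappa_2-2c}{\sqrt 2}, \qquad v := \frac{\kappa_1-\kappa_2}{\sqrt 2}\geq 0,
\end{equation*}
so that $u$ is the coordinate along the diagonal $y=x$ and $v$ the one orthogonal to it. Since $(\kappa_1-c)^2+(\kappa_2-c)^2=u^2+v^2$ and $(\kappa_1-c)(\kappa_2-c)=(u^2-v^2)/2$, a direct computation shows that \eqref{cuasi} is equivalent to $|u|\leq \alpha(\Lambda)\,v$ with $\alpha(\Lambda):=\sqrt{(|\Lambda|-1)/(|\Lambda|+1)}\in[0,1)$. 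Thus the quasi-CMC wedge \eqref{cuasi} is precisely the cone of aperture $\alpha(\Lambda)<1$ around the $v$-axis, and this aperture runs through the full interval $(0,1)$ as $\Lambda$ varies in $(-\infty,-1)$.

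Next I would translate property~${\bf (H)}$ into the same coordinates. The statement that $\cD$ lies inside a region of $\{x\geq y\}$ cuspidal at the diagonal with tangents orthogonal to the diagonal means, in $(u,v)$-coordinates near the umbilic, that $\cD\subset\{v\geq 0,\ |u|\leq\phi(v)\}$ for some $\phi$ with $\phi(v)/v\to 0$ as $v\to 0^+$. Fixing any $\alpha\in(0,1)$ (say $\alpha=1/2$), the cuspidal condition then supplies a neighborhood of $p$ on $\Sigma$ throughout which $|u|\leq\alpha v$ holds pointwise; equivalently, \eqref{cuasi} holds there with $\Lambda(p):=-(1+\alpha^2)/(1-\alpha^2)\leq -1$. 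Remark~\ref{rem:1} then applies and yields the conclusion. No serious obstacle is expected in this line of reasoning: everything reduces to a linear-algebraic comparison between two wedge regions around the diagonal, the only delicate point being the rigorous translation of Hopf's geometric property~${\bf (H)}$ into the analytic bound $\phi(v)=o(v)$ as $v\to 0^+$.
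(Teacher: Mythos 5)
Your proposal is correct and follows essentially the same route as the paper: the paper also reduces property ${\bf (H)}$ to the curvature condition of Remark~\ref{rem:1} (via the intermediate, weaker wedge property ${\bf (W)}$ of Corollary~\ref{cor:0}, of which ${\bf (H)}$ is a particular case), and your rotated-coordinates computation simply makes explicit the equivalence that the paper declares ``easily seen.''
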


More specifically, our results imply a positive solution to Hopf's problem under a much weaker condition:

\begin{corollary}\label{cor:0}
An immersed sphere $\Sigma$ in $\R^3$ must be a round sphere if its curvature diagram $\cD$ satisfies the following property ${\bf (W)}$: \emph{$\cD$ has wedges of negative slopes at the diagonal (not necessarily cusps)}. See Figure \ref{fig:Hopf}, right.
\end{corollary}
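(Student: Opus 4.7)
The plan is to derive Corollary \ref{cor:0} as a direct consequence of Theorem \ref{th:main1} and Remark \ref{rem:1}, by showing that property ${\bf (W)}$ is just a coordinate-free way of stating that \eqref{cuasi} holds locally at each umbilic, for some $\Lambda(p)\leq -1$. In other words, all I need to verify is that every wedge region of negative slopes at a diagonal point $(c,c)$ is contained in some region of the form $\{(\kappa_1,\kappa_2): (\kappa_1-c)^2+(\kappa_2-c)^2\leq 2\Lambda (\kappa_1-c)(\kappa_2-c)\}\cap\{\kappa_1\geq\kappa_2\}$.

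First, I would fix an umbilic $p\in\Sigma$, write $c=\kappa_1(p)=\kappa_2(p)$, and pass to the shifted variables $x=\kappa_1-c$, $y=\kappa_2-c$, so that property ${\bf (W)}$ becomes: in a neighborhood of $p$, the image of the curvature diagram lies in some wedge $\cW_p$ with vertex at the origin of the $(x,y)$-plane, bounded by two lines of (strictly) negative slopes $s_1\leq s_2<0$, and lying in the half-plane $x\geq y$. Next, I would describe the region $\cR_\Lambda:=\{x^2+y^2\leq 2\Lambda xy\}\cap\{x\geq y\}$ associated to \eqref{cuasi}: it is a wedge bounded by the two lines $y=m_\pm(\Lambda)\,x$ with
\[
m_\pm(\Lambda)=\Lambda\pm\sqrt{\Lambda^2-1},\qquad m_+(\Lambda)\,m_-(\Lambda)=1,
\]
both of which are negative for $\Lambda<-1$. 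As $\Lambda\to -\infty$ one has $m_-(\Lambda)\to-\infty$ and $m_+(\Lambda)\to 0^{-}$, so $\cR_\Lambda$ exhausts the open region $\{x\geq 0,\,y\leq 0\}\cup\{x\leq 0,\,y\leq x\}$.

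Using this monotonicity, for the prescribed wedge $\cW_p$ with slopes $s_1\leq s_2<0$ I would choose $\Lambda=\Lambda(p)\leq -1$ sufficiently negative that $m_-(\Lambda)\leq s_1$ and $m_+(\Lambda)\geq s_2$, which guarantees $\cW_p\subset \cR_{\Lambda(p)}$. Therefore, locally near $p$, the principal curvatures of $\Sigma$ satisfy inequality \eqref{cuasi} with constants $c=\kappa_i(p)$ and $\Lambda=\Lambda(p)\leq -1$. Applying this construction at every umbilic, Remark \ref{rem:1} immediately yields that $\Sigma$ is a round sphere.

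I do not foresee a genuine obstacle here, since the deep content is entirely packaged inside Theorem \ref{th:main1} and Remark \ref{rem:1}; the only point that deserves care is the sign analysis of $m_\pm(\Lambda)$ and verifying that $\cR_\Lambda$ really does exhaust all wedges of negative slope as $\Lambda\to-\infty$, which is an elementary calculation with the quadratic $m^2-2\Lambda m+1=0$.
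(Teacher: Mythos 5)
Your proposal is correct and takes essentially the same route as the paper, whose proof simply asserts that property ${\bf (W)}$ is equivalent to the curvature condition of Remark \ref{rem:1} and then invokes that remark; you merely fill in the elementary verification, namely that a wedge of slopes $s_1\leq s_2<0$ at $(c,c)$ is contained in the region of \eqref{cuasi} once $\Lambda$ is chosen so negative that $m_-(\Lambda)\leq s_1$ and $m_+(\Lambda)\geq s_2$, which is exactly what is needed. (One harmless slip: the union of the regions $\cR_\Lambda$ does not exhaust $\{x\leq 0,\ y\leq x\}$ nor the coordinate axes, only the open quadrant $\{x>0,\ y<0\}$ together with the origin; but this side remark is never used in your containment argument, which relies only on $m_-(\Lambda)\to-\infty$ and $m_+(\Lambda)\to 0^-$.)
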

\begin{proof}
The hypothesis that $\Sigma$ satisfies property ${\bf (W)}$ is easily seen to be equivalent to the curvature condition imposed in Remark \ref{rem:1}. Thus, Corollary \ref{cor:0}, and so, Corollary \ref{cor:menosuno}, hold.
\end{proof}

\begin{figure}[htbp]
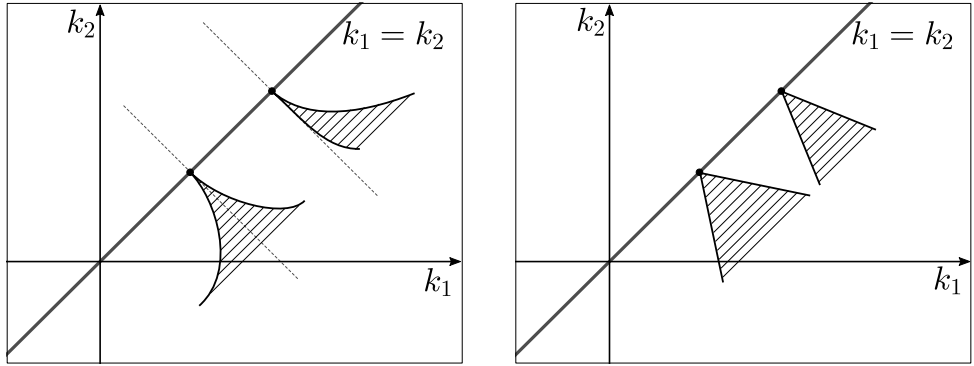

    \centering
    \includegraphics[width=0.4\textwidth]{DiagramaHopfG2.pdf} \hspace{0.5cm}  \includegraphics[width=0.4\textwidth]{DiagramaHopfG.pdf}
    \caption{Left: Curvature diagram restriction ${\bf (H)}$ in Hopf's problem. Right: Curvature diagram restriction ${\bf (W)}$, with wedges of negative slopes at the diagonal. Note that ${\bf (W)}$ includes ${\bf (H)}$ as a particular case.}
    \label{fig:Hopf}
\end{figure}

The proof of Theorem \ref{th:main1} relies on the Poincaré-Hopf theorem. Specifically, we will show (Section \ref{sec:main}) that the umbilics of a quasi-CMC surface in $\R^3$ are isolated, and have non-positive index, unless the surface is totally umbilical. This proves the theorem. However, the proof of these properties is not elementary, due to the generality of the quasi-CMC condition \eqref{cua0}. In order to show that they hold, we will first develop in Section \ref{sec:an} an analytic study of the Hessian $D^2 u$ around critical points of smooth solutions $u$ to linear, uniformly elliptic equations $L[u]=0$ with bounded (non-continuous) coefficients. More specifically, we will show that ${\rm det}(D^2 u)<0$ in a punctured neighborhood of any such critical point, and that the vector fields $\nabla u_x$ and $\nabla u_y$ have non-positive index. The key tool for proving this result will be the Bers-Nirenberg representation, \cite{BN}. The analytic results of Section \ref{sec:an} might have further geometric applications.

In Section \ref{sec:ani} we will give an \emph{anisotropic} version of Theorem \ref{th:main1}, see Theorem \ref{th:aniso}, showing that any immersed sphere $\Sigma$ in $\R^3$ that is quasiconformal with respect to a given ovaloid $S_0$ must be a translation of this ovaloid. This extends to the genus zero case an important result by Han, Nadirashvili and Yuan \cite{HNY}, who proved (via the study of $1$-homogeneous solutions to a uniformly elliptic equation in $\R^3$) that the statement of Theorem \ref{th:aniso} holds when \emph{both} surfaces $\Sigma, S_0$ are ovaloids; see Section \ref{sec:ani} for more details on this connection.

Theorems \ref{th:main1} and \ref{th:aniso} seem sharp in a number of directions, and have additional geometric consequences. These will be discussed in Section \ref{sec:2} below, and in an Appendix.

\section{Discussion, sharpness and consequences of the result}\label{sec:2}

Theorem \ref{th:main1} has two topological hypotheses that are unavoidable for its validity. On the one hand, the result is strictly two-dimensional, since there exist immersed, non-round CMC hypersurfaces in $\R^{n+1}$ that are diffeomorphic to $\S^n$ for any $n\geq 3$, see \cite{Hs}. On the other hand, the genus zero hypothesis cannot be removed, since there exist immersed compact CMC surfaces of arbitrary genus in $\R^3$. Moreover, as already pointed out in the introduction, a thin tube over a simple, closed regular curve in $\R^3$ is an example of a closed, embedded quasi-CMC surface of genus one in $\R^3$. So, the result is topologically sharp.

The quasiconformal condition \eqref{cuasi} is related to a classical conjecture by A.D. Alexandrov \cite{A0,A}, which can be formulated in the following way: \emph{if a closed, $C^2$ convex surface $S\subset \R^3$ has its principal curvatures $\kappa_1,\kappa_2>0$ satisfying at any point 
 \begin{equation}\label{ecur1}
 (\kappa_1-c)(\kappa_2-c)\leq 0, \hspace{0.5cm} \text{and equality holds only if $\kappa_1=\kappa_2=c$},
 \end{equation}
for some constant $c>0$, then $S$ is a sphere of radius $1/c$.} Alexandrov proved this result in \cite{A0} for the case that $S$ is real analytic. The conjecture remained open for a long time, until Martinez-Maure \cite{MM} constructed in 2001 a $C^2$ counterexample to it; see also Panina \cite{Pa1}.

One should observe at this point that the quasi-CMC inequality \eqref{cuasi} implies the Alexandrov condition \eqref{ecur1}. More specifically, \eqref{cuasi} is actually the uniformly elliptic version of the degenerate elliptic condition \eqref{ecur1}. In this sense, it is interesting to remark that Theorem \ref{th:main1} does not hold under the weaker Alexandrov hypothesis \eqref{ecur1}. Indeed, in Lemma \ref{lem:ej2} of the Appendix we will construct many smooth non-round spheres embedded in $\R^3$ that satisfy the Alexandrov inequality \eqref{ecur1}. This indicates that the quasi-CMC condition \eqref{cuasi} in Theorem \ref{th:main1} is close to being sharp.

In the particular case where the quasi-CMC sphere $\Sigma$ is strictly convex, Theorem \ref{th:main1} can be obtained as a direct consequence of the uniqueness theorem by Han, Nadirashvili and Yuan \cite{HNY} mentioned above; see also \cite[Sect. 1.6]{NTV}. We note that the approach in \cite{HNY} needs the convexity assumption, and so it cannot handle the general case of immersed spheres that we study here.

In the real analytic case, Theorem \ref{th:main1} was previously known, after a theorem of Voss and Münzner (see Satz III in \cite{Mu}): \emph{Any real analytic immersed sphere in $\R^3$ satisfying $(\kappa_1-c)(\kappa_2-c)\leq 0$ for some $c>0$ is a sphere of radius $1/c$.} The analyticity assumption in this result cannot be removed, by the examples given in Lemma \ref{lem:ej2}, or in Panina \cite{Pa1}.

An immediate, non-trivial consequence of Theorem \ref{th:main1} is the characterization of round spheres as the only compact elliptic Weingarten surfaces of genus zero, under much weaker hypotheses on the Weingarten equation than the previous classification results for this type of surfaces \cite{Ho,Ch,HW1,Br,GM3}. Specifically, we obtain the result below directly from Corollary \ref{cor:0}:

\begin{corollary}\label{cor1}
Let $\Sigma$ be an immersed sphere in $\R^3$ whose principal curvatures $\kappa_1\geq \kappa_2$ satisfy a Weingarten equation $\kappa_1= f(\kappa_2)$, for some real function $f$. Assume that $f$ is continuous and its Dini derivatives are negative and finite at every point. Then, $\Sigma$ is a round sphere.
\end{corollary}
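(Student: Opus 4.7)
The strategy is to reduce to Remark~\ref{rem:1} by verifying the quasi-CMC inequality \eqref{cuasi} in a neighborhood of every umbilic of $\Sigma$.

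Since $\Sigma$ is an immersed sphere, Poincar\'e-Hopf produces at least one umbilic point. Fix such a point $p$ and set $c:=\kappa_1(p)=\kappa_2(p)$; the Weingarten relation then gives $f(c)=c$. The analytic core of the argument is to use the four-Dini-derivative hypothesis at $c$ to extract a uniform wedge bound. Since $D^\pm f(c)$ and $D_\pm f(c)$ all lie in $(-\infty,0)$, the definitions of $\limsup$ and $\liminf$ produce constants $0<m<M<\infty$ and $\delta>0$ with
\[
-M \;\le\; \frac{f(c+h)-c}{h} \;\le\; -m \qquad \text{for all }0<|h|<\delta.
\]
Here the strict negativity of $D^\pm f(c)$ yields the strictly negative upper bound $-m$, and the finiteness of $D_\pm f(c)$ yields the finite lower bound $-M$, uniformly on both sides of $c$ in a punctured neighborhood.

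Next, I would transfer this estimate to the curvature diagram. By continuity there is a neighborhood $U\subset\Sigma$ of $p$ on which $|\kappa_2-c|<\delta$. If some $q\in U$ had $h:=\kappa_2(q)-c>0$, the estimate above combined with $\kappa_1=f(\kappa_2)$ would force $\kappa_1(q)\le c-mh<c+h=\kappa_2(q)$, contradicting $\kappa_1\ge\kappa_2$; hence $\kappa_2\le c$ on $U$, and at every non-umbilic point of $U$,
\[
\frac{\kappa_1-c}{\kappa_2-c}\;\in\;[-M,-m]\;\subset\;(-\infty,0).
\]
As $\Lambda\to-\infty$ the interval $[\Lambda-\sqrt{\Lambda^2-1},\Lambda+\sqrt{\Lambda^2-1}]$, which are the roots of the quadratic $r^2-2\Lambda r+1$, expands to fill $(-\infty,0)$. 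Thus I may choose $\Lambda=\Lambda(p)\le-1$ with $[-M,-m]$ contained in this root-interval. For such $\Lambda$ we have $r^2-2\Lambda r+1\le 0$ with $r=(\kappa_1-c)/(\kappa_2-c)$; multiplying by $(\kappa_2-c)^2$ yields exactly the quasi-CMC inequality \eqref{cuasi} on $U$. Applying Remark~\ref{rem:1} then gives that $\Sigma$ is a round sphere.

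\textbf{Main obstacle.} The only delicate step is the extraction of the two-sided uniform bound on the slope $(f(c+h)-c)/h$ from the pointwise Dini hypothesis: the negativity (giving the strict upper bound $-m$) and the finiteness (giving the lower bound $-M$) must be used simultaneously, and on both sides of $c$. Once this uniform wedge estimate is in hand, the remainder of the proof is a routine algebraic matching with \eqref{cuasi} and a direct appeal to the already-established Remark~\ref{rem:1}.
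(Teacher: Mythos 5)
Your proof is correct and follows essentially the same route as the paper: the paper's one-line argument observes that the Dini hypothesis forces the curvature diagram, a subset of the curve $\kappa_1=f(\kappa_2)$, to approach the diagonal inside a wedge of negative slopes, and then invokes Corollary \ref{cor:0}, which is precisely the reformulation of Remark \ref{rem:1} that you verify directly. Your explicit extraction of the slope bounds $[-M,-m]$ from the four Dini derivatives at each umbilic value $c=f(c)$, and the choice of $\Lambda(p)\leq -1$ so that $[-M,-m]$ lies between the roots of $r^2-2\Lambda r+1$, simply makes explicit what the paper leaves implicit.
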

\begin{proof}
The Dini condition on $f$ automatically implies that the curvature diagram of $\Sigma$, which is a subset of the curve $\kappa_1=f(\kappa_2)$, approaches the principal diagonal of the $(\kappa_1,\kappa_2)$-plane inside a wedge of negative slopes, and so the result follows from Corollary \ref{cor:0}.
\end{proof}

Note that if $f\in C^1$, the Dini condition is equivalent to $f'<0$, and we recover the general classification of elliptic Weingarten spheres in \cite{GM3}. Also, note that the equation $\kappa_1=f(\kappa_2)$ is not symmetric in $(\kappa_1,\kappa_2)$; therefore, Corollary \ref{cor1} (or the classification in \cite{GM3}) is not covered by the classical works of Hopf, Chern, Hartman-Wintner and Bryant about the classification of elliptic Weingarten spheres. 

We single out another interesting consequence of Theorem \ref{th:main1}. It gives a non-trivial property of any immersed sphere in $\R^3$ with positive (not constant) mean curvature.

\begin{corollary}\label{corh}
If an immersed sphere $\Sigma$ in $\R^3$ has mean curvature $H\geq 1$ at every point, then unless $\Sigma$ is a sphere of radius $1$, there should exist some point of $\Sigma$ with $K>1$.
\end{corollary}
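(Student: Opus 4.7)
The plan is to deduce the corollary directly from Theorem \ref{th:main1} by verifying that the hypotheses produce a valid quasi-CMC inequality with $c=1$. Arguing contrapositively, I would assume that $H\geq 1$ and $K\leq 1$ hold at every point of $\Sigma$; the goal is then to conclude that $\Sigma$ is a round sphere of radius $1$.

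First I would observe that since $K\leq 1$,
\[
H^2 - K \;\geq\; H^2 - 1 \;=\; (H-1)(H+1),
\]
so at any point where $H>1$ the inequality $(H-1)^2 \leq \mu(H^2-K)$ is implied by $\mu\geq (H-1)/(H+1)$, while at any point with $H=1$ it holds trivially. Because $\Sigma$ is a smooth compact immersed surface, $H$ attains a finite maximum $H_{\max}$, and the choice
\[
\mu := \frac{H_{\max}-1}{H_{\max}+1} \;\in\; [0,1)
\]
makes $(H-c)^2\leq \mu(H^2-K)$ hold pointwise on $\Sigma$ with $c=1$. Thus $\Sigma$ is a quasi-CMC sphere in the sense of \eqref{cua0}.

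Applying Theorem \ref{th:main1} then gives that $\Sigma$ is a round sphere, necessarily of some radius $r>0$, with $H\equiv 1/r$ and $K\equiv 1/r^2$. The standing assumptions $H\geq 1$ and $K\leq 1$ force $r\leq 1$ and $r\geq 1$ respectively, so $r=1$.

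The one point that requires any attention is the verification that $\mu$ can be chosen strictly less than $1$, which is exactly where the smooth compactness of $\Sigma$ (giving an a priori upper bound on $H$) enters; beyond this, there is no real obstacle, since all the difficult work is done by Theorem \ref{th:main1}.
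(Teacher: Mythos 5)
Your proposal is correct and follows essentially the same route as the paper: both verify that $K\leq 1\leq H$ yields the quasi-CMC inequality \eqref{cua0} with $c=1$ and $\mu=(H_{\max}-1)/(H_{\max}+1)<1$ (the paper checks this by bounding a quadratic in $H$, you by the equivalent monotonicity observation), and then invoke Theorem \ref{th:main1}. Your explicit determination of the radius via $H=1/r$, $K=1/r^2$ is a fine way to finish the step the paper leaves implicit.
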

\begin{proof}
Let $\Sigma$ be an immersed sphere in $\R^3$ for which $K\leq 1\leq H$ holds. Then, $\Sigma$ satisfies \eqref{cua0} for $c=1$ and $$\mu= \frac{H_0-1}{H_0+1}, \hspace{0.5cm} H_0:= {\rm max}_{\Sigma} \, H.$$ Indeed, for that value of $\mu$ we have by $K\leq 1$ that $$(H-1)^2-\mu (H^2-K)\leq H^2 (1-\mu) - 2 H + 1 + \mu,$$ and the right hand-side is $\leq 0$ for all values $H\in [1,H_0]$. Therefore, $\Sigma$ is a sphere of radius $1$, by Theorem \ref{th:main1}.
\end{proof}

Observe again that Corollary \ref{corh} is not true without the genus zero assumption; take for example a thin rotational tube over a large circle. Note that Corollary \ref{corh} is not an elementary result, since it contains as a particular case Hopf's soap bubble theorem (due to the general inequality $H^2\geq K$).

Some of the arguments that we present in this paper carry over naturally to the uniqueness study of immersed spheres in Riemannian $3$-manifolds, by means of the notion of \emph{transitive family} of surfaces, see \cite{GM3,GM4}. Our index study here also gives relevant information about overdetermined elliptic problems, as in \cite{GM5,Mi,EM}. However, these lines of inquiry will not be pursued here.

\section{On the Hessian of solutions to linear elliptic equations}\label{sec:an}

Consider the linear homogeneous equation
\begin{equation}\label{linpde}
L[u]:=a_{11} u_{xx} + 2 a_{12} u_{xy} + a_{22} u_{yy} +b_1 u_x + b_2 u_y =0,
\end{equation}
where the coefficients are bounded, measurable functions in some domain $\Omega\subset \R^2$, that satisfy the uniform ellipticity condition 
 \begin{equation}\label{elipcon}
\landa_1 |\xi|^2 \leq \sum a_{ij} \xi_i \xi _j \leq \landa_2 |\xi|^2 \hspace{1cm} \forall \xi=(\xi_1,\xi_2)\in \R^2,
 \end{equation}
for some positive constants $0<\landa_1\leq \landa_2$. Note that no regularity is assumed for the coefficients of \eqref{linpde}. In particular, they might be discontinuous.

In \cite{BN}, Bers and Nirenberg showed that critical points of solutions $u$ to \eqref{linpde} are isolated, and the gradient $\nabla u$ has around such critical points a zero of finite order, and a nodal structure equivalent to that of a holomorphic function. In Theorem \ref{lemanal2} below we use the Bers-Nirenberg representation in \cite{BN} to study the Hessian $D^2u$ of smooth solutions to \eqref{linpde}. We remark that when the coefficients $a_{ij}$ of \eqref{linpde} are $C^1$, or more generally Holder continuous, Theorem \ref{lemanal2} is known and follows from results of Hartman-Wintner \cite{HW} and Bers \cite{Be}.

Also, note that a $C^{\8}$ regularity assumption on the solution $u$ is not unnatural, even if the coefficients of \eqref{linpde} are discontinuous. For instance, consider two linear, uniformly elliptic homogeneous operators $L_i[u]$, $i=1,2$, as in \eqref{linpde}, with smooth coefficients, and let $u(x,y)$ be a smooth function on $\Omega$ satisfying $$L_1[u]\leq0\leq L_2[u].$$ Then, there exists a uniformly elliptic operator $\cL[u]$ as in \eqref{linpde}, but this time with discontinuous coefficients in general, such that $\cL[u]=0$. See also Theorem \ref{anath} below.

\begin{theorem}\label{lemanal2}
Let $u\in C^{\8}(\Omega)$ be a non-constant solution to \eqref{linpde}, with $\nabla u(p_0)=(0,0)$ at some $p_0\in \Omega$. Then, there exists a punctured disk $\cD^*\subset \Omega$ centered at $p_0$ such that:
\begin{enumerate}
\item
${\rm det}(D^2 u)<0$ in $\cD^*$.
 \item
The (common) index of the gradient vector fields $\nabla u_x$, $\nabla u_y$ around $p_0$ is non-positive.%$\leq 0$.
\end{enumerate}
\end{theorem}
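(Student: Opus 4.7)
\emph{Setup.} Introduce the complex gradient $w := u_x - i u_y \in C^\infty(\Omega)$. Direct computation gives the identities
\[
\det(D^2 u) = |w_{\bar z}|^2 - |w_z|^2, \qquad \nabla u_x \leftrightarrow w_{\bar z} + \overline{w_z}, \qquad \nabla u_y \leftrightarrow i\,(w_{\bar z} - \overline{w_z}),
\]
the last two identifying real vector fields with complex numbers. Rewriting \eqref{linpde} in terms of $w_z$, $w_{\bar z}$ and $w$, and using the ellipticity estimate $|a_{11}-a_{22}+2ia_{12}|\le k\,(a_{11}+a_{22})$ with a constant $k<1$ depending only on $\landa_1,\landa_2$ in \eqref{elipcon}, I obtain the pointwise Beltrami-type inequality
\[
|w_{\bar z}|\le k\,|w_z| + C\,|w|,
\]
with $C$ bounded on $\Omega$. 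The Bers-Nirenberg theorem \cite{BN} then applies to $w$: in a disk $\cD \ni p_0$ there is a representation $w(z)=e^{s(z)} F(\chi(z))$, where $s$ is H\"older continuous, $\chi$ is a quasiconformal homeomorphism with $\chi(p_0)=0$, and $F$ is holomorphic with an isolated zero of some finite order $n\ge 1$ at $0$. In particular $p_0$ is an isolated zero of $w$, and $|w(z)|$ is comparable to $|z-p_0|^n$ up to H\"older distortion.

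\emph{Proof of (1).} The same representation forces a lower bound $|w_z(z)|\ge c\,|z-p_0|^{n-1}$ on a small punctured disk $\cD^*\subset\cD$, so that $|w|/|w_z|\to 0$ as $z\to p_0$. Inserting this into the Beltrami inequality gives
\[
\frac{|w_{\bar z}(z)|}{|w_z(z)|}\le k + C\,\frac{|w(z)|}{|w_z(z)|}<1
\]
on $\cD^*$ (after shrinking if necessary). Therefore $\det(D^2 u)=|w_{\bar z}|^2-|w_z|^2<0$ on $\cD^*$, which is statement (1).

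\emph{Proof of (2).} By (1), $w_z\ne 0$ on $\cD^*$, and the straight-line homotopy $V_t:=t\,w_{\bar z}+\overline{w_z}$ satisfies $|V_t|\ge |w_z|-t|w_{\bar z}|>0$ for every $t\in[0,1]$. Hence $V_t$ connects $\overline{w_z}=V_0$ to $\nabla u_x=V_1$ through non-vanishing continuous vector fields on $\cD^*$, and the parallel homotopy $i\,V_t$ connects $-i\overline{w_z}$ to $\nabla u_y$. Since the index is a homotopy invariant and multiplication by a unit complex constant preserves winding,
\[
\mathrm{ind}(\nabla u_x,p_0)=\mathrm{ind}(\nabla u_y,p_0)=\mathrm{ind}(\overline{w_z},p_0)=-\,\mathrm{ind}(w_z,p_0).
\]
The theorem therefore reduces to the inequality $\mathrm{ind}(w_z,p_0)\ge 0$, which is the pseudo-holomorphic analog of the classical fact that the derivative of a holomorphic function with a zero of order $n$ has a zero of order $n-1$ with positive winding.

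\emph{Main obstacle.} The hard step is verifying $\mathrm{ind}(w_z,p_0)\ge 0$. One cannot simply differentiate $w=e^{s}F(\chi)$, since the Bers-Nirenberg factors $s$ and $\chi$ are only H\"older continuous and quasiconformal, respectively, so that the formal expression for $w_z$ involves a priori only distributional objects. The strategy is to exploit that $w$ itself is $C^\infty$ and, by (1), a smooth orientation-preserving branched cover of degree $n$ on $\cD^*$: via a Stoilow-type factorization $w=h^n$ for a local homeomorphism $h$ (smooth on $\cD^*$ by (1)) and a careful tracking of the winding of $w_z$ through the quasiconformal change of coordinates $\chi$, one shows that the winding of $w_z$ on a small loop around $p_0$ is exactly $n-1\ge 0$, which finishes the proof.
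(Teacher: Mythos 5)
Your setup (the complex gradient $w=u_x-iu_y$, the Beltrami-type inequality $|w_{\bar z}|\le k|w_z|+C|w|$, and the Bers--Nirenberg representation) coincides with the paper's starting point, and the homotopy $V_t=t\,w_{\bar z}+\overline{w_z}$ reducing item (2) to $\mathrm{ind}(w_z,p_0)\ge 0$ is a correct observation \emph{once (1) is known}. But both halves of the proposal rest on unproved claims that are in fact the crux. In (1) you assert that the representation $w=e^{s}F(\chi)$ ``forces'' the lower bound $|w_z(z)|\ge c|z-p_0|^{n-1}$. It does not: $s$ is only H\"older continuous and $\chi$ only quasiconformal, so the representation yields two-sided bounds on $|w|$ of the form \eqref{2holder} and no pointwise information on $Dw$ at all --- precisely the non-differentiability you yourself invoke in your ``main obstacle'' paragraph. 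Since a lower bound for $|w_z|$ on a punctured disk already contains statement (1) (it gives $w_z\neq 0$ there, and with the Beltrami inequality, $|w_{\bar z}|<|w_z|$), the argument as written is circular. Note also that your proof never uses the $C^{\infty}$ hypothesis, which is a warning sign: the paper needs it exactly here. Its route is to use the left inequality of \eqref{2holder} plus smoothness to extract a first nonzero homogeneous Taylor term $\omega$ of degree $\nu$ as in \eqref{uom}, to pass the Beltrami inequality to $\omega$ by dividing by $|z|^{\nu-1}$ and letting $z\to 0$ (giving \eqref{eqom}), to prove $|\omega_z|^2-|\omega_{\bar z}|^2>0$ away from the origin by an algebraic factorization argument, and only then to conclude (1) by a blow-up. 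None of this mechanism (or a substitute for it) appears in your proof.

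For (2), after the reduction you explicitly leave $\mathrm{ind}(w_z,p_0)\ge 0$ unproved: the Stoilow-factorization/``careful tracking through $\chi$'' plan is a program, not an argument, and it collides with the same non-differentiability of $s,\chi$; moreover the exact identity ``winding of $w_z$ equals $n-1$'' is neither justified nor needed. The paper avoids computing this index altogether: by (1), $\nabla u_x$, $\nabla u_y$ and all $\nabla u_\theta$ are pairwise non-collinear on the punctured disk and hence share one index; if that index were positive, \cite[Lemma 3.1]{AM} forces it to be $1$ and gives a strict sign of $u_\theta$ near $p_0$ for each $\theta$, continuity in $\theta$ makes the sign uniform, and this yields $u\ge u(p_0)$ (or $\le$) on a disk, contradicting the maximum principle for \eqref{linpde}. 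So the key step of each part is missing; to complete your approach you would either have to supply the Taylor-polynomial blow-up argument for (1) and then an actual proof of $\mathrm{ind}(w_z,p_0)\ge 0$ (e.g.\ via the homogeneous model $\omega_z$), or switch to the softer index-plus-maximum-principle argument the paper uses for (2).
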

\begin{proof}
%Critical points of solutions to \eqref{linpde} are isolated, by Bers-Nirenberg \cite{BN}. So, item (1) holds. 
%Also note that, due the ellipticity of \eqref{linpde}, we have $u_{xx}u_{yy}-u_{xy}^2 \leq 0$ in $\Omega$, with equality holding only if $D^2 u=0$. Thus, the zeros of  $\nabla u_x$ and $\nabla u_y$ agree with the points where $D^2 u =0$.
We start with some well known manipulations. By \eqref{elipcon}, $a_{11}+a_{22}$ is bounded between two positive constants. Dividing \eqref{linpde} by this quantity, we can rewrite it in terms of the complex parameter $z=x+iy$ as
 \begin{equation}\label{pdez}
 2 u_{z\bar{z}} + \mu u_{zz} + \bar{\mu} u_{\bar{z}\bar{z}}+ \beta u_z + \overline{\beta} u_{\bar{z}}=0,
  \end{equation} where 
  \begin{equation}\label{ecmu}
  \mu := \frac{a_{11}-a_{22}+2i a_{12}}{a_{11}+a_{22}},\hspace{1cm} \beta := \frac{b_1+ib_2}{a_{11}+a_{22}}.
  \end{equation}
In this way, if $0<\mu_1(z)\leq \mu_2(z)$ denote the eigenvalues of $(a_{ij})$ at a point $z\in \Omega$, then we have at $z$ $$|\mu| = \frac{K_{\mu}-1}{K_{\mu}+1}, \hspace{1cm} K_{\mu}:= \frac{\mu_2}{\mu_1}.$$
In particular, it holds
 \begin{equation}\label{inemu}
 |\mu|\leq \mu_0:= \frac{\cK-1}{\cK+1} <1, \hspace{1cm} \cK := \frac{\landa_2}{\landa_1} \geq 1.
 \end{equation}

Let now $p_0\in \Omega$ be a critical point of $u$, and assume $p_0=(0,0)$ for simplicity. Consider a simply connected domain containing the origin and whose closure is contained in $\Omega$. For simplicity, as our study is local, we still denote this domain by $\Omega$. In these conditions we may apply the Bers-Nirenberg representation theorem in \cite{BN} (see also \cite{Be2,BJS}), which says that the complex gradient $f=u_z$ of any solution $u$ to \eqref{pdez} can be written in $\Omega$ as 
 \begin{equation}\label{bn}
 f(z)= e^{s(z)} F(\chi(z)),
 \end{equation}
where:
\begin{enumerate}
\item[(i)]
$s:\Omega\flecha \C$ is a Holder continuous function. It is equal to zero when $\beta=0$.
 \item[(ii)]
$\chi:\Omega\flecha \Omega':=\chi(\Omega)\subset \C$ is a $\cK$-quasiconformal homeomorphism, that satisfies the Beltrami equation $\chi_{\bar{z}}=\mu \chi_z$ for $\mu$ given by \eqref{ecmu}. In particular, both $\chi$ and $\chi^{-1}$ are of class $C^{\alfa}$, with $\alfa=1/\cK$; see, e.g., \cite{AIM}.
 \item[(iii)]
$F$ is a holomorphic function on $\Omega'$.
\end{enumerate}

We proceed with the rest of the proof. If $D^2 u$ is not zero at the origin, then item (1) follows by the ellipticity of \eqref{linpde}, and item (2) is trivial since $\nabla u_x, \nabla u_y$ do not vanish around the origin, and so their index is zero. So, assume from now on that $D^2u=0$ at the origin, i.e., $Df(0)=0$. Also, $f(0)=0$ since the origin is a critical point of $u$. In \eqref{bn}, denote $\xi_0:= \chi(0)$, and let $n\geq 1$ denote the order of the zero of $F(\zeta)$ at $\zeta_0$ (note that $F(\zeta_0)=0$ and that $F\not\equiv 0$ since $u$ is non-constant).

It follows then from \eqref{bn} and the fact that both $\chi$ and $\chi^{-1}$ are Holder with exponent $1/\cK$ that, in a sufficiently small neighborhood of the origin, we have 
 \begin{equation}\label{2holder}
 c_1 |z|^{n\cK}\leq |f(z)| \leq c_2 |z|^{n/\cK},
 \end{equation}
for positive constants $c_1,c_2$.

Thus, since $f\in C^{\8}(\Omega)$, we have by the left inequality in \eqref{2holder} that there exists a first homogeneous non-zero term of degree $\nu\geq 2$ in the Taylor series expansion of $f$ at $0$. Then, we can write around the origin 
 \begin{equation}\label{uom}
f(z) = \omega(z) + o(|z|^{\nu})
 \end{equation} where $\omega=\omega_1+i\omega_2$ is a complex-valued homogeneous polynomial in $\R^2$ of degree $\nu$. 
 
 It follows from \eqref{pdez} that
 $$|f_{\bar{z}}|\leq \mu_0 |f_z| + c |f|,$$
 for some positive constant $c>0$, where $\mu_0$ is given by \eqref{inemu}. Therefore, dividing this inequality by $|z|^{\nu-1}$ and taking limits as $|z| \to 0$ we obtain from \eqref{uom} that
  \begin{equation}\label{eqom}
|\omega_{\bar{z}}|\leq \mu_0 |\omega_z|.
\end{equation} 
Observe that $J(z,\omega):=|\omega_z|^2-|\omega_{\bar{z}}|^2 \geq 0$ by \eqref{eqom}, since $\mu_0<1$. We claim that $J(z,\omega)>0$ in $\R^2-\{(0,0)\}$. 

Indeed, assume that $J(z,\omega)=0$ at some $\eta_0\in \C-\{0\}$. %$(x_0,y_0)\neq (0,0)$. 
By \eqref{eqom}, we have $D\omega(\eta_0)=0$. By Euler's formula for homogeneous functions, $\omega(\eta_0)=0$. Thus, $\omega$ admits a factorization of the form $\omega(z)=(\alfa_0 z + \overline{\alfa_0} \bar{z})^m Q(z)$, with $m\geq 2$, where $\alfa_0:=i \overline{\eta_0}$, and $Q$ is a complex-valued homogeneous polynomial with $Q(\eta_0)\neq 0$. Thus, away from the line $L \equiv \alfa_0 z + \overline{\alfa_0} \bar{z}=0$ in $\C$ we have

$$\frac{|\omega_{\bar{z}}|}{|\omega_z|} = \frac{|m \, \overline{\alfa_0}\,Q + (\alfa_0 z + \overline{\alfa_0} \bar{z}) Q_{\bar{z}}|}{| m \, \alfa_0\, Q + (\alfa_0 z + \overline{\alfa_0} \bar{z}) Q_{z}|}.$$ This quantity converges to $1$ as we approach $L$, a contradiction with \eqref{eqom}.

Therefore, $J(z,\omega)>0$ except at the origin. A standard blow-up argument from \eqref{uom} shows then that $J(z,f)>0$ on a sufficiently small punctured disk $D^*(0,\ep_0)$, i.e., that $u_{xx} u_{yy}-u_{xy}^2<0$ in that punctured disk. This proves item (1) of the statement. Observe that this implies that the origin is an isolated critical point of both $u_x$ and $u_y$.

To finish the proof, let us show the non-positivity of the topological index of the vector fields $\nabla u_x$ and $\nabla u_y$ around the origin. Here, we recall that if a $C^1$ function $w(x,y)$ has an isolated critical point at $p_0$, the topological index of its gradient $\nabla w$ at $p_0$ is given by the winding number of $\nabla w$ around a sufficiently small circle centered at $p_0$.

First of all, note that if $\nabla u_x$ and $\nabla u_y$ were collinear at some point in $D^*(0,\ep_0)$, we would have $u_{xx}u_{yy}-u_{xy}^2=0$ at that point, which does not happen. Thus, the vector fields $\nabla u_x$, $\nabla u_y$ have the same index around the origin. By the same argument, this index also agrees with that of $\nabla u_{\theta}$, where $u_{\theta}:= \cos \theta u_x + \sin \theta u_y$, $\theta \in [0,2\pi)$.

Assume that the index of $\nabla u_x$ at the origin is positive. By a known argument (see \cite[Lemma 3.1]{AM}), this index is equal to $1$, and either $u_x>u_x(0,0)$ or $u_x<u_x(0,0)$ in a small punctured neighborhood of the origin. So, since the origin is a critical point of $u$, either $u_x>0$ or $u_x<0$. Moreover, one of the analogous inequalities holds for $u_{\theta}$ and each $\theta \in [0,2\pi)$, by the previously observed invariance of the index of $\nabla u_{\theta}$. By continuity then, one of these two inequalities should hold around the origin for all values of $\theta$. For definiteness, assume that $u_{\theta}(x,y)>0$ in a punctured neighborhood of the origin, for all $\theta$. This implies that, in a sufficiently small disk $D(0,\delta)$, we have $$u(x,y)\geq u(0,0),$$ which gives a contradiction with the maximum principle for \eqref{linpde}. Therefore, the common index of $\nabla u_x$ and $\nabla u_y$ is non-positive, what completes the proof of Theorem \ref{lemanal2}.
\end{proof}

The above arguments actually yield a more general result, that we present below, which controls the behavior of solutions to a natural differential inequality around their zeros. The statement of Theorem \ref{anath} below in the particular case $\mu_0=0$ is well known, and a direct consequence of the Bers-Vekua similarity principle. In that situation, the solution $f$ is asymptotically holomorphic around its zeros. However, when $\mu_0\neq 0$, this holomorphic behavior does not hold in general, as the example $f(z)=z |z|^2$ shows. The similarity principle for $\mu_0=0$ has been an important tool in the study of harmonic maps between surfaces, see e.g. Jost \cite[p. 75]{J}. In this sense, although Theorem \ref{anath} will not be used in this paper, we believe that it might have further geometric applications. For related analytic results, see e.g. Chapter 7 of Schulz \cite{Sch}. 
\begin{theorem}\label{anath}
Let $f$ be a complex, non-zero smooth function in $\Omega\subset \C$ that satisfies in a neighborhood of $z_0\in \Omega$ the inequality
\begin{equation}\label{anaine}|f_{\bar{z}}|\leq \mu_0 |f_z| + c |f|,
\end{equation} 
where $\mu_0<1$ and $c>0$ are constants. Assume $f(z_0)=0$. Then, there exists a punctured disk $D^*\subset\Omega$ centered at $z_0$ so that:

\begin{enumerate}
\item
$J(z,f):=|f_z|^2-|f_{\bar z}|^2>0$ holds in $D^*$.
\item
${\rm Re} (f)$, ${\rm Im} (f)$ have at $z_0$ an isolated critical point of non-positive index.
\end{enumerate}
\end{theorem}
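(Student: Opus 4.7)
The plan is to adapt the proof of Theorem~\ref{lemanal2} almost verbatim, with two modifications: first, to rewrite the inequality \eqref{anaine} as a linear PDE so that the Bers--Nirenberg representation can be applied directly to $f$; and second, to replace the maximum-principle step ruling out positive index by a symmetry argument, since here we work with $f$ itself rather than with an antiderivative $u$ solving an elliptic equation.

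First I would recast \eqref{anaine} as a linear equation $f_{\bar z}=\mu f_z+\beta f$ with bounded measurable coefficients $|\mu|\le\mu_0<1$ and $|\beta|\le c$. This is possible pointwise, since the closed disk $\overline{D}(0,\mu_0|f_z|+c|f|)$ containing $f_{\bar z}$ coincides with the Minkowski sum $\overline{D}(0,\mu_0|f_z|)+\overline{D}(0,c|f|)$, and an explicit measurable selection of $(\mu,\beta)$ is standard. Then the Bers--Nirenberg representation \eqref{bn} yields $f(z)=e^{s(z)}F(\chi(z))$ with $s$ H\"older, $\chi$ a $\cK$-quasiconformal homeomorphism for $\cK=(1+\mu_0)/(1-\mu_0)$, and $F$ holomorphic on $\chi(\Omega)$. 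Combined with the $C^{\8}$ regularity of $f$, the two-sided H\"older bound \eqref{2holder} forces a Taylor expansion $f(z)=\omega(z-z_0)+o(|z-z_0|^\nu)$ around $z_0$, where $\omega$ is a complex-valued homogeneous polynomial of finite degree $\nu\ge1$.

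Dividing \eqref{anaine} by $|z-z_0|^{\nu-1}$ and letting $z\to z_0$ gives $|\omega_{\bar z}|\le\mu_0|\omega_z|$, which is \eqref{eqom}. The factorization argument from the proof of Theorem~\ref{lemanal2} then applies verbatim to conclude $J(z,\omega)>0$ on $\C\setminus\{0\}$, and the blow-up argument based on \eqref{uom} upgrades this to $J(z,f)>0$ on a suitable punctured disk $D^*\subset\Omega$ centered at $z_0$, proving~(1). As a byproduct, $z_0$ is an isolated critical point of both $\mathrm{Re}(f)$ and $\mathrm{Im}(f)$.

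For~(2), write $f=u+iv$ and, for $\theta\in[0,2\pi]$, set $u_\theta:=\cos\theta\,u+\sin\theta\,v=\mathrm{Re}(e^{-i\theta}f)$. Since $J(z,f)=u_x v_y-u_y v_x>0$ on $D^*$, the gradients $\nabla u,\nabla v$ are linearly independent there, so $\nabla u_\theta$ is nonvanishing on $D^*$ for every $\theta$; its topological index at $z_0$ is well defined and, by continuity of the winding number in $\theta$, independent of $\theta$. If this common index were positive, by \cite[Lemma 3.1]{AM} it would equal $+1$ and $u_\theta$ would have a strict extremum at $z_0$ for every $\theta$; a standard open/closed argument on a fixed small circle around $z_0$ then forces the extremum type (min vs.\ max) to be independent of $\theta$. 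But the involution $u_{\theta+\pi}=-u_\theta$ reverses this type, a contradiction. The delicate step is the first one: verifying that \eqref{anaine} can indeed be written as a Bers--Nirenberg-type equation with coefficients satisfying the hypotheses of the representation theorem. Once this is in place, the involutive symmetry $u_{\theta+\pi}=-u_\theta$ substitutes cleanly for the role of the maximum principle used in Theorem~\ref{lemanal2}.
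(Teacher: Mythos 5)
Your reduction of \eqref{anaine} to a first-order linear equation $f_{\bar z}=\mu f_z+\beta f$ with $|\mu|\le\mu_0$, $|\beta|\le c$, and your proof of part (1) via the Bers--Nirenberg representation \eqref{bn}, the two-sided bound \eqref{2holder}, the homogeneous leading term and the limiting inequality $|\omega_{\bar z}|\le \mu_0|\omega_z|$, coincide with the paper's argument (the paper makes the measurable selection explicit by writing $|f_{\bar z}|=\tau(\mu_0|f_z|+c|f|)$ and inserting phase factors, which is the same device as your Minkowski-sum selection). The genuine problem is in part (2), at the step you yourself flag as ``standard'': the claim that an open/closed argument \emph{on a fixed small circle} forces the extremum type of $u_\theta=\mathrm{Re}(e^{-i\theta}f)$ at $z_0$ to be independent of $\theta$. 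The extremum type is a germ property at $z_0$: for each $\theta$, \cite[Lemma 3.1]{AM} gives a strict sign of $u_\theta$ only on a punctured disk of radius $r_\theta$ depending on $\theta$, and nothing you have established prevents $r_\theta$ from shrinking as $\theta$ varies, with $u_\theta$ vanishing and changing sign in the annulus between $r_\theta$ and your fixed circle. Such sign changes require no critical points of $u_\theta$ in $D^*$ (a closed zero-level curve of $u_\theta$ encircling $z_0$ is compatible with $\nabla u_\theta\neq 0$ there and with index $+1$), so the sign of $u_\theta$ on a fixed circle simply does not determine, nor vary continuously with, the sign of $u_\theta$ near $z_0$. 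Hence neither $S^{\pm}=\{\theta:\pm u_\theta>0 \text{ near } z_0\}$ is shown to be open, and the connectedness argument preceding the involution $u_{\theta+\pi}=-u_\theta$ is unsupported.

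To close this gap you need precisely the input that constitutes the paper's proof of (2): from \eqref{bn}, $F\circ\chi$ is an open map sending $z_0$ to $0$, so the image of a small circle $|z-z_0|=r$ under $F\circ\chi$ winds a nonzero number of times around the origin, while $e^{s}$ has arbitrarily small argument variation on that circle; hence $\arg f$ varies by more than $\pi/2$ along such circles. This already contradicts the consequence of a positive index for a \emph{single} $\theta$ (by \cite[Lemma 3.1]{AM} applied to $\mathrm{Re}\,f$ and $\mathrm{Im}\,f$, positive index would confine $f(D^*)$ to an open quadrant, indeed to a half-plane), so once the winding information is available your $\theta$-family and the involution are unnecessary. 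Alternatively, one can extract the nonzero winding degree-theoretically from $J(z,f)>0$ on $D^*$ together with $f(z_0)=0$ (small nonzero regular values near $0$ are attained, and every preimage in $D^*$ counts positively); but some argument of this winding type must be supplied --- the fixed-circle open/closed claim cannot stand on its own, and with it your substitute for the maximum principle collapses.
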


\begin{proof}
For each $z\in \Omega$, there exists $\tau=\tau(z)\in [0,1]$ such that $$|f_{\bar{z}}|= \tau( \mu_0 |f_z| + c |f|),$$ as a consequence of \eqref{anaine}. Observe that, at points where $f=Df=0$, the value of $\tau$ is not uniquely determined, and so it can be chosen arbitrarily. In particular $\tau$ is not continuous. Write next $$f= e^{i\theta_0}|f|,\hspace{0.5cm} f_z = e^{i\theta_1} |f_z|, \hspace{0.5cm} f_{\bar{z}}= e^{i\theta_2} |f_{\bar{z}}|,$$ where again the values $\theta_j=\theta_j(z)\in [0,2\pi)$, $j=0,1,2$, are chosen arbitrarily at the zeros of their corresponding functions. We have then
 \begin{equation}\label{ani2}
 f_{\bar{z}} = \mu f_z + \alfa f , \hspace{1cm} \mu:=\mu_0 \tau e^{i(\theta_2-\theta_1)}, \hspace{0.2cm} \alfa:=c\, \tau e^{i(\theta_2-\theta_0)} .
 \end{equation}
Clearly, $${\rm sup} \{ |\mu(z)|: z\in \Omega\} \leq \mu_0<1, \hspace{1cm} {\rm sup}\{|\alfa(z)|:z\in \Omega\}\leq c.$$In particular, \eqref{ani2} is a linear uniformly elliptic system with bounded coefficients in the conditions of the Bers-Nirenberg representation \cite{BN}. Therefore, $f$ admits a representation \eqref{bn} around $z_0$.

Now, regarding assertion (1) in the statement, note that \eqref{bn} implies that $f$ satisfies \eqref{2holder}.  By the same arguments in the proof of Theorem \ref{lemanal2}, we conclude that $J(z,f)>0$ in a punctured disk $D^*\subset \Omega$ around $z_0$.

To prove assertion (2), write $f_1={\rm Re}(f)$, $f_2={\rm Im}(f)$. From $J(z,f)>0$ we obtain that $\nabla f_1$, $\nabla f_2$ are always linearly independent in $D^*$. In particular, $z_0$ is, at worst, an isolated critical point of both $f_1,f_2$, and the indices of the gradients $\nabla f_1,\nabla f_2$ at $z_0$ coincide. 

Assume that this index is positive. By \cite[Lemma 3.1]{AM}, we have that each of $f_1$ and $f_2$ is either always positive or always negative in $D^*$, that is, $f(D^*)$ lies in an open quadrant of $\C$. But next, recall that by \eqref{bn} we have $f=e^s (F\circ \chi)$ in $D^*$, where $s$ is continuous, $F$ is holomorphic and $\chi$ is a quasiconformal homeomorphism, all in $D=D^*\cup \{z_0\}$. Also, $F\circ \chi$ is an open mapping in $D$ that sends $z_0$ to the origin, and in particular, the image of the restriction of $F\circ \chi$ to a small enough circle $|z-z_0|=r$ winds around the origin a non-zero number of times. On the other hand, making $D^*$ smaller if necessary, we can assume that $e^s(D)$ is a small open neighborhood of some non-zero $\zeta_0\in \C^*$, and in particular the argument function of $e^s$ has an arbitrary small image along $|z-z_0|=r$. Thus, since ${\rm arg} (f) = {\rm arg }(e^s) + {\rm arg (F\circ \chi)}$, the variation of the argument of $f$ around any such circle can be made larger than $\pi/2$. This contradicts that $f(D^*)$ lies in a quadrant. Therefore, the index is non-positive, and this completes the proof.
\end{proof}

\section{Proof of Theorem \ref{th:main1} and Remark \ref{rem:1}}\label{sec:main}

Theorem \ref{th:main1} and Remark \ref{rem:1} will be an almost direct consequence of a local result (Theorem \ref{localth}) that we prove below. To explain this, let us recall that, at any non-umbilical point of a surface $\Sigma$ in $\R^3$, there exist exactly two eigenlines $\cL_1,\cL_2$ of its second fundamental form, which describe the principal directions of $\Sigma$ at the point. Thus $\cL_i$, $i=1,2$, are continuous line fields on $\Sigma- \cU$, where $\cU$ is the umbilic set of $\Sigma$. In addition, if $p\in \cU$ is an \emph{isolated} umbilic, the principal line fields $\cL_1, \cL_2$ have a common well defined index at $p$, which is a half-integer. This number is called the \emph{index} of the isolated umbilic $p$. We have:

\begin{theorem}\label{localth}
Let $p$ be an umbilic of a surface $\Sigma$ in $\R^3$, let $c:=\kappa_i(p)$, $i=1,2$, and assume that \eqref{cuasi} holds in a neighborhood of $p$, for some $\Lambda<0$. Then, either $\Sigma$ is totally umbilical around $p$, or $p$ is an isolated umbilic of non-positive index.

In particular, any quasi-CMC surface in $\R^3$ is either totally umbilical, or it has only isolated umbilics of non-positive index.
\end{theorem}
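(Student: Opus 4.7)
The plan is to reduce the local assertion of Theorem \ref{localth} to an application of Theorem \ref{lemanal2} via a graph parametrization near $p$; the global ``in particular'' clause then follows by applying the local statement at every umbilic of $\Sigma$.

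Choose coordinates in $\R^3$ so that $p=0$, $T_p\Sigma$ is the $xy$-plane, and $\Sigma$ is locally the graph of a smooth function $u(x,y)$ with $u(0)=0$, $\nabla u(0)=0$ and $D^2u(0)=cI$. Let $\phi_c$ denote the height function of the round sphere of mean curvature $c$ tangent to $\Sigma$ at $p$ with the same unit normal, so that $H[\phi_c]\equiv c$ and the $2$-jets of $u$ and $\phi_c$ coincide at $0$, and set $v:=u-\phi_c$. Then $v$, $\nabla v$ and $D^2v$ all vanish at $0$. The key analytic step is to show that $v$ satisfies a linear, uniformly elliptic equation of the form \eqref{linpde} with bounded (possibly discontinuous) coefficients. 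Applying the fundamental theorem of calculus along $\phi_c+tv$, $t\in[0,1]$, one writes $H[u]-c=\widetilde{\mathcal L}[v]$ for a linear operator $\widetilde{\mathcal L}$ acting only on $\nabla v$ and $D^2v$, whose coefficients are bounded in terms of the $C^2$-norm of $u$. Since $\phi_c$ is umbilical, $H^2-K$ vanishes identically on $\phi_c$, so a Taylor expansion in $v$ gives $H^2-K=\mathcal B[v]+\text{(higher order in }v)$ with $\mathcal B$ a non-negative quadratic form in $(\nabla v,D^2v)$ whose value at the origin is $|2v_{zz}|^2$. Substituting into \eqref{cua0} and using $\mu<1$, the quasi-CMC inequality becomes, in a neighborhood of $0$,
\[
|v_{z\bar z}|^2\leq \mu\,|v_{zz}|^2 + (\text{lower order in } v,\nabla v).
\]
The bounded-parameter trick from the first paragraphs of the proof of Theorem \ref{anath} then recasts this as an equation $(1-\alpha)v_{xx}+(1+\alpha)v_{yy}-2\beta v_{xy}+b_1 v_x+b_2 v_y=0$ with $\alpha^2+\beta^2\leq\mu<1$ and bounded measurable $b_j$, which is precisely of the form \eqref{linpde} and uniformly elliptic with constants depending only on $\mu$.

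We apply Theorem \ref{lemanal2} to $v$. If $v\equiv 0$ in a neighborhood of $0$, then $u=\phi_c$ and $\Sigma$ is totally umbilical around $p$. Otherwise, there is a punctured disk $\cD^*$ about $0$ on which $\det D^2v<0$ and on which $\nabla v_x,\nabla v_y$ have common non-positive index at $0$. The quasi-CMC inequality forces every umbilic of $\Sigma$ to have common principal curvature equal to $c$: at such a point $H^2-K=0$, so $(H-c)^2\leq 0$ gives $H=c$. Hence at any umbilic $q\in\cD^*$ the shape operator equals $cI$, i.e.\ $D^2u(q)=c\sqrt{W(q)}\,g(q)$ with $g=I+du\otimes du$ and $W=1+|\nabla u|^2$; a short rank-one determinant computation then yields
\[
\det D^2v(q)=c^2\bigl(\sqrt{W(q)}-1\bigr)\bigl(W(q)^{3/2}-1\bigr)\geq 0,
\]
contradicting $\det D^2v<0$. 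Therefore $\cD^*$ is umbilic-free and $p$ is an isolated umbilic. For the index, use the algebraic identity $v_{xx}+iv_{xy}=2(\overline{v_{zz}}+v_{z\bar z})$ together with the pointwise bound $|v_{z\bar z}|\leq\mu_0|v_{zz}|$, $\mu_0<1$ (a consequence of the PDE above, valid on $\cD^*$ after possibly shrinking): the winding number of the vector field $\nabla v_x$ around $0$ coincides with that of $\overline{v_{zz}}$, namely $-N$, where $N$ is the winding number of $v_{zz}$. Theorem \ref{lemanal2}(2) gives $N\geq 0$. Finally, the complex ``Hopf-like'' function encoding the principal directions of $\Sigma$ near $p$ has leading term $4v_{zz}$ (the corrections from $\nabla\phi_c$ and the quasilinear nature of the shape operator being strictly of higher order in $|z|$ than $|v_{zz}|$), so it has winding $N$ around $0$, and the corresponding principal line field at $p$ has index $-N/2\leq 0$.

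The principal obstacle is the clean derivation of the linear PDE for $v$: one must control precisely the lower-order corrections in the expansions of $H-c$ and $H^2-K$ around $\phi_c$, and verify that the resulting quadratic inequality in $(\nabla v,D^2v)$ can be recast as a linear uniformly elliptic equation whose ellipticity constants depend only on the quasi-CMC constant $\mu$.
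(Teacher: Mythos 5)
Your overall strategy is the paper's: compare $\Sigma$ with the tangent totally umbilical graph, show the height difference $v$ solves a linear uniformly elliptic equation with bounded measurable coefficients, apply Theorem \ref{lemanal2}, and translate its conclusions into isolation of the umbilic and non-positivity of the index of the principal line fields (your winding-number bookkeeping, index of $\nabla v_x$ equal to that of $\overline{v_{zz}}$, i.e.\ $-N$ with $N\geq 0$, and line-field index $-N/2$, is a correct reformulation of the paper's argument with the field $Z=(-2h_{xy},h_{xx}-h_{yy})$). However, your proof that $p$ is an \emph{isolated} umbilic is wrong as written. At a nearby umbilic $q$ with principal curvatures equal to $c$ you compute $\det D^2v(q)=c^2\bigl(\sqrt{W}-1\bigr)\bigl(W^{3/2}-1\bigr)\geq 0$; this presumes $D^2\phi_c(q)=c\,{\rm Id}$, which holds only at $q=p$. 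Since $\phi_c$ is the \emph{fixed} sphere tangent at $p$, at $q\neq p$ one has $D^2\phi_c(q)=c\sqrt{1+|\nabla \phi_c|^2}\,\bigl({\rm Id}+d\phi_c\otimes d\phi_c\bigr)$, so $D^2v(q)=c\bigl(P(\nabla u(q))-P(\nabla\phi_c(q))\bigr)$ with $P(w):=\sqrt{1+|w|^2}\,\bigl({\rm Id}+w\otimes w\bigr)$, and such a difference can have negative determinant: for $\nabla u(q)=(s,0)$, $\nabla\phi_c(q)=(0,s)$ one gets $\det D^2v(q)=-c^2(1+s^2)s^4<0$. Hence there is no pointwise contradiction with $\det D^2v<0$, and nearby umbilics are not excluded by this algebra. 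Isolation really needs the asymptotic input from Bers--Nirenberg: $v$ vanishes to finite order $k$, $D^2v=D^2w+o(\varrho^{k-2})$ with $w$ homogeneous and $\det D^2w<0$ off the origin (so $|w_{zz}|\geq c_0\varrho^{k-2}$), and the shape operator satisfies $(\alfa_{ij})-c\,\delta_{ij}=D^2v+O(|\nabla v|)=D^2w+o(\varrho^{k-2})$; this is exactly the paper's \eqref{eq:a3}--\eqref{hhk}, and it gives $H^2-K>0$ on a punctured disk. The same estimate is what you would need (and only assert) when claiming that the Hopf-like function of $\Sigma$ has leading term $4v_{zz}$ with corrections of strictly higher order than $|v_{zz}|$; so this missing asymptotic step is doing double duty in your write-up and must be supplied.

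A second, smaller soft spot is the one you flag yourself: deriving the linear equation for $v$ by expanding $(H-c)^2$ and $H^2-K$ around $\phi_c$ is workable, but the errors are not merely ``lower order in $v,\nabla v$'' --- they contain terms of size $o(1)|D^2v|^2$ and $|D^2v|\,|\nabla v|$ that must be absorbed (shrink the neighborhood, worsen $\mu$ slightly, use $|D^2v|\leq C(|v_{zz}|+|v_{z\bar z}|)$) before taking square roots and running the similarity-type trick. The paper avoids this entirely: the wedge form \eqref{cuasim} of \eqref{cuasi} says that an interpolated, uniformly elliptic Weingarten operator $\cW^{\tau}$ (with discontinuous $\tau(q)\in[0,1]$) vanishes \emph{exactly} on $\Sigma$, and it vanishes on the umbilical comparison graph for every $\tau$; the mean value theorem applied to $\Phi^{\tau}[u]=\Phi^{\tau}[u^0]=0$ then yields $L[h]=0$ with no error terms to control. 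I recommend either adopting that route or writing out the absorption argument explicitly, and in any case replacing the pointwise determinant argument for isolation by the asymptotic one above.
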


Let us explain how Theorem \ref{th:main1} and Remark \ref{rem:1} follow from Theorem \ref{localth}. Assume that $\Sigma$ is an immersed sphere in $\R^3$ that satisfies \eqref{cuasi} around each umbilic $p\in \Sigma$, where $c=c(p)$ and $\Lambda=\Lambda(p)$ may have different values on different umbilics. 

Suppose first of all that there exists a connected component $\cV\subset \Sigma$ of the totally umbilical set of $\Sigma$ that has non-empty interior, and take $p\in \parc \cV$. Then, $p$ is also an umbilic of $\Sigma$, and by Theorem \ref{localth}, $\Sigma$ is totally umbilical around $p$. Thus, the closure of $\cV$ is an open set, and by connectedness, $\Sigma$ must be a totally umbilical round sphere.

Assume next that $\Sigma$ does not contain totally umbilical open sets. Then, by Theorem \ref{localth}, the umbilic set $\cU$ of $\Sigma$ is finite, and the index of any $p\in \cU$ is non-positive. In this way, $\cL_1, \cL_2$ define two continuous line fields in $\Sigma$ with a finite number of singularities, all of them of non-positive index. By the Poincare-Hopf theorem, the sum of such indices equals the Euler characteristic of $\Sigma$. This gives a contradiction, since $\Sigma$ has positive Euler characteristic.

Thus, we only need to prove Theorem \ref{localth}, which we do next:
\begin{proof}[Proof of Theorem \ref{localth}.] Start by noting that \eqref{cuasi} can be rewritten as 
\begin{equation}\label{cuasim}
m_2(\kappa_1-c)\leq \kappa_2-c \leq m_1 (\kappa_1-c), \hspace{0.5cm} \{m_1,m_2\} := \{\Lambda \pm \sqrt{\Lambda^2-1}\} <0.
\end{equation}
Consider for $i=1,2$ the Weingarten functionals 
 \begin{equation}\label{def:wei}
\cW_i (\kappa_1,\kappa_2):= -m_i(\kappa_1-c) + \kappa_2 -c, %\hspace{0.5cm} \cW_2 (\kappa_1,\kappa_2):= -m_2 (\kappa_1-c) + \kappa_2 -c,
 \end{equation} 
defined on the set $\{\kappa_1\geq \kappa_2\}\subset \R^2$. 
Note that, for $i=1,2$,
 \begin{equation}\label{eliuni}
\frac{\parc \cW_{i}}{\parc \kappa_1} = -m_i >0, \hspace{0.5cm} \frac{\parc \cW_{i}}{\parc \kappa_2} =1.
  \end{equation}

\noindent This implies that the fully nonlinear operators $\cW_{i}$ are uniformly elliptic. 

In what follows, let $p\in \Sigma$ denote an umbilic, with principal curvatures equal to $c$. By hypothesis, \eqref{cuasim} holds in a neighborhood $V\subset \Sigma$ of $p$, and so, for each $q\in V$ there exists a value $\tau(q)\in [0,1]$ such that 
 \begin{equation}\label{weit}
 \cW^{\tau} (\kappa_1(q),\kappa_2 (q),q) =0, %\hspace{0.5cm} \text{for all $q\in \cU$},
 \end{equation}
where 
 \begin{equation}\label{def:weit}
 \cW^{\tau} (\kappa_1,\kappa_2,q):= \tau (q) \cW_1(\kappa_1,\kappa_2) + (1-\tau(q)) \cW_2 (\kappa_1,\kappa_2).
 \end{equation}

\noindent One should observe here that $\tau$ is not continuous, and its value is undetermined at the points $q\in V$ where $\kappa_1=\kappa_2$, i.e., it can be chosen arbitrarily at those points. Note that, by \eqref{eliuni},
 \begin{equation}\label{eliweit}
0<-m_1\leq \frac{\parc \cW^{\tau}}{\parc \kappa_1} \leq -m_2, \hspace{0.5cm} \frac{\parc \cW^{\tau}}{\parc \kappa_2} =1,
  \end{equation}
which is a uniform ellipticity condition on $\cW^{\tau}$.
We can rewrite \eqref{def:weit} as 
\begin{equation}\label{def:weit2}
\cW^{\tau} = -c(1-m_2+\tau (m_2-m_1)) - \tau m_1 \kappa_1 -(1-\tau) m_2 \kappa_1 + \kappa_2
\end{equation}

Let $S_c$ denote the totally umbilical surface with principal curvatures equal to $c$, and that is tangent to $\Sigma$ at the umbilic $p$, with the same unit normal. \emph{We suppose from now on that $\Sigma$ and $S_c$ do not agree in a neighborhood of $p$}. Up to ambient isometry, for simplicity, we will assume that $p$ is the origin, and the unit normal of $\Sigma$ at $p$ is $e_3=(0,0,1)$. Let $z=u(x,y)$ and $z=u^0(x,y)$ be graphical local representations around the origin of $\Sigma$ and $S_c$, respectively. Thus,
$$u^0(x,y)=\frac{c(x^2+y^2)}{1+ \sqrt{1-c^2 (x^2+y^2)}}.$$

In this way, \eqref{weit} shows that $u(x,y)$ is a solution to a second order PDE
\begin{equation}\label{def:phitau}
\Phi^{\tau}[u]:=\Phi^{\tau} (x,y,u_x,u_y,u_{xx},u_{xy},u_{yy})=0.
\end{equation}
Here, $\Phi^{\tau}(x,y,p,q,r,s,t)$ is defined in a neighborhood of $(0,0,0,0,c,0,c)\in \R^7$. By \eqref{def:weit2}, its explicit expression is given by $$\Phi^{\tau}=  -c(1-m_2+\tau (m_2-m_1)) - \tau m_1 \cK_1 -(1-\tau) m_2 \cK_1 + \cK_2,$$
where $\tau=\tau(x,y)$, $0\leq \tau\leq 1$, $\{\cK_1,\cK_2\}:=\{\cH\pm \sqrt{\cH^2-\cK}\}$, and
\begin{equation}\label{eq:hk}
\cH(p,q,r,s,t):= \frac{r(1+q^2)-2pq s + (1+p^2)t}{2(1+p^2+q^2)^{3/2}}, \hspace{0.5cm} \cK(p,q,r,s,t):=\frac{r t-s^2}{(1+p^2+q^2)^2}.
\end{equation}

It is classical that the functions $\cK_i(p,q,r,s,t)$, $i=1,2$, have bounded first derivatives in a neighborhood of any point of the form $(0,0,c,0,c)\in \R^5$; see e.g. \cite[Section 5]{FGM} for the explicit computation. So, $\Phi^{\tau}(x,y,p,q,r,s,t)$ has the same property. In addition, it is immediate that $u^0(x,y)$ is also a solution to \eqref{def:phitau}, as $S_c$ satisfies $\cW_i(\kappa_1,\kappa_2)=0$ for $i=1,2$. 

So, if we denote $h=u-u^0$, a standard application of the mean value theorem to $\Phi^{\tau}$ shows that the restriction of $h$ to some convex neighborhood of the origin satisfies a linear, homogeneous equation with bounded coefficients 
$$L[h]:=a_{11}h_{xx} + 2 a_{12}h_{xy} + a_{22}h_{yy} + b_1h_x + b_2h_y =0.$$ Here, for each $(x,y)$ fixed in such a neighborhood, $$a_{11} (x,y):= \int_0^1 \frac{\parc \Phi^{\tau}}{\parc r} (x,y, u_p^{\vartheta},u_q^{\vartheta},u_r^{\vartheta},u_s^{\vartheta},u_t^{\vartheta} )d\vartheta,$$ where $u^{\vartheta}:= \vartheta u(x,y) + (1-\vartheta) u_0(x,y)$, etc., with similar formulas for the rest of the coefficients. Note that these coefficients are not continuous in $(x,y)$, in general.

Moreover, the ellipticity condition \eqref{eliweit} on $\cW^{\tau}$ actually implies that the operator $L[h]$ is uniformly elliptic, i.e., it satisfies \eqref{elipcon} around the origin, for adequate constants $0<\landa_1\leq \landa_2$. This is a standard fact, see e.g. Alexandrov \cite{A2}, which can also be checked by direct computation from \eqref{eliweit} and the definition of the coefficients $a_{ij}$.

Also, note that $\nabla h=(0,0)$ at the origin. Therefore, $h(x,y)$ is in the conditions of Theorem \ref{lemanal2}. In particular, since $h$ is not constant (recall that we are assuming that $\Sigma$ is not totally umbilical around $p$), $h$ has a zero of finite order $k>2$ at $(0,0)$ (by Bers-Nirenberg) and we have from Theorem \ref{lemanal2} that $h_{xx} h_{yy}-h_{xy}^2<0$ in a punctured disk $D^*\subset \Omega$ centered at the origin. Observe that $k>2$ since the origin is an umbilic of principal curvatures equal to $c$ of both $z=u(x,y)$ and $z=u^0(x,y)$, and so $D^2 h$ vanishes at $(0,0)$.

Define next the analytic functions $$\Psi_1 (p,q):=\frac{p}{\sqrt{1+p^2+q^2}},\hspace{0.5cm} \Psi_2 (p,q):=\frac{q}{\sqrt{1+p^2+q^2}}.$$ Then, we can write for any pair $(p,q)$,  $(p_0,q_0)\in \R^2$ near the origin, and for $i=1,2$,
 \begin{equation}\label{menvt}
 \Psi_i(p,q)-\Psi_i (p_0,q_0)= \cA_{i1}(p-p_0)+\cA_{i2}(q-q_0),
 \end{equation}
where each $\cA_{ij}$ depends analytically on $(p,q,p_0,q_0)$, and $(\cA_{ij})= {\rm Id}$ at $(0,0,0,0)$. This follows, for instance, from the mean value theorem, or alternatively from a series expansion of the left hand-side of \eqref{menvt} with respect to $(p,q,p_0,q_0)$ around $(0,0,0,0)$.

Let us denote $(\alfa_{ij}):=II \cdot I^{-1}$, with $I,II$ being the first and second fundamental forms of $z=u(x,y)$, written with respect to the coordinates $(x,y)$. By a standard computation using the above notations and $\nabla u:=(u_x,u_y)$, we have
\begin{equation}\label{eq:a1}
(\alfa_{ij}) = \left(\def\arraystretch{1.3}\begin{array}{cc} \left(\Psi_1(\nabla u)\right)_x &  \left(\Psi_2(\nabla u) \right)_x \\  \left(\Psi_1(\nabla u)\right)_y&  \left(\Psi_2(\nabla u) \right)_y \end{array}\right).
\end{equation}

The same computation, but this time for $u^0(x,y)$, gives that 
\begin{equation}\label{eq:a2}
\left(\def\arraystretch{1.3}\begin{array}{cc} c & 0 \\ 0 & c \end{array}\right) = \left(\def\arraystretch{1.3}\begin{array}{cc} \left(\Psi_1(\nabla u^0)\right)_x &  \left(\Psi_2(\nabla u^0) \right)_x \\  \left(\Psi_1(\nabla u^0)\right)_y&  \left(\Psi_2(\nabla u^0) \right)_y \end{array}\right), 
\end{equation}
where we have used that $z=u^0(x,y)$ is totally umbilical, with principal curvatures equal to $c$.

Then, from \eqref{eq:a1} and \eqref{eq:a2}, using \eqref{menvt} and $h=u-u^0$, we obtain
$$\def\arraystretch{3} \begin{array}{lll}
(\alfa_{ij} -c \, \delta_{ij}) & = &\left(\def\arraystretch{1.3}\begin{array}{cc} \left(\Psi_1(\nabla u)-\Psi_1(\nabla u^0)\right)_x & \left(\Psi_2(\nabla u)-\Psi_2(\nabla u^0)\right)_x\\ \left(\Psi_1(\nabla u)-\Psi_1(\nabla u^0)\right)_y &\left(\Psi_2(\nabla u)-\Psi_2(\nabla u^0)\right)_y\end{array}\right) \\
 & = & \left(\def\arraystretch{1.3}\begin{array}{cc} \left(\cA^0_{11}\, h_{x} + \cA^0_{12}\, h_y\right)_x & \left(\cA^0_{21}\, h_{x} + \cA^0_{22} \,h_y\right)_x\\ \left(\cA^0_{11}\,h_{x} + \cA^0_{12} \, h_y\right)_y &\left(\cA^0_{21}\, h_{x} + \cA^0_{22} \, h_y\right)_y\end{array}\right),
 \end{array}$$
where $\cA^0_{ij}:= \cA_{ij}(u_x,u_y,u_x^0,u_y^0)$. Therefore, recalling that $(\cA_{ij})={\rm Id}$ when $(p,q,p_0,q_0)=(0,0,0,0)$, and that $h(x,y)$ has a zero of order $k>2$ at the origin, we obtain from the above expression that
\begin{equation}\label{eq:a3}
(\alfa_{ij} -c \, \delta_{ij})= \left(\def\arraystretch{1.3}\begin{array}{cc} h_{xx} & h_{xy}\\ h_{xy} &h_{yy} \end{array}\right) + o(\varrho^{k-2}),
\end{equation}
where $\varrho:=\sqrt{x^2+y^2}$. Hence, from \eqref{eq:a3}
 \begin{equation}\label{hhk}
 H^2-K=\frac{1}{4}(h_{xx}-h_{yy})^2 + h_{xy}^2 + o(\varrho^{2k-4})
 \end{equation}
where $H,K$ denote the mean and Gauss curvature of $z=u(x,y)$. Since $h_{xx}h_{yy}-h_{xy}^2<0$ in $D^*$, we have that $(h_{xx}-h_{yy})^2 + 4 h_{xy}^2$ is positive in $D^*$, and has a zero of order $2k-4$ at the origin. Thus, by \eqref{hhk}, $H^2-K>0$ holds in a maybe smaller punctured disk $D_0^*\subset D^*$ centered at the origin. This implies that $p$ is an isolated umbilic of $\Sigma$, as claimed in Theorem \ref{localth}.

To end the proof of Theorem \ref{localth}, we need to control the index of the umbilic $p$. In coordinates $(x,y)$, the principal line fields of $\Sigma$ around $p$ are given by the solutions to 
 \begin{equation}\label{plf}
 -\alfa_{12} dx^2 + (\alfa_{11}-\alfa_{22}) dx dy + \alfa_{21} dy^2 =0.
 \end{equation}
 Note that this equation remains invariant if we change $(\alfa_{ij})$ by $(\alfa_{ij}-c \, \delta_{ij})$. Thus, by \eqref{eq:a3}, it is clear that these line fields have around $p$ the same index as the line fields around $(0,0)$ given by  
 \begin{equation}\label{plf3}
 -h_{xy} (dx ^2-dy^2)+ (h_{xx}-h_{yy}) dx dy =0.
  \end{equation}
Note that the line fields \eqref{plf3} are well defined around the origin, since $h_{xx}h_{yy}-h_{xy}^2<0$ in $D^*(0,\rho_0)$. By a classical argument, the index of the line fields given by \eqref{plf3} is $\leq 0$ if and only if the index of the vector field $Z=(-2h_{xy},h_{xx}-h_{yy})$ is $\leq 0$; see Hopf \cite[p. 167]{Ho}. But now, note that $\esiz Z,\nabla h_y\esde <0$ in $D^*(0,\rho_0)$. In particular, $Z$ and $\nabla h_y$ have the same index at the origin. Thus, by Theorem \ref{lemanal2}, we deduce that the index of the principal line fields of $\Sigma$ around $p$ is $\leq 0$. 

This concludes the proof of Theorem \ref{localth}. As explained at the beginning of this section, this shows that Theorem \ref{th:main1} and Remark \ref{rem:1} hold.
\end{proof}

Let us remark that the non-positivity of the index of umbilics of quasi-CMC surfaces given in Theorem \ref{localth} cannot be improved to negativity, in contrast with the CMC case. See Lemma \ref{lem:eje1} in the Appendix.

\section{A uniqueness theorem for ovaloids and applications}\label{sec:ani}
We will next modify the arguments in the proof of Theorem \ref{th:main1} to obtain a more general result, in which the uniqueness property is not obtained for round spheres, but for an arbitrary ovaloid of $\R^3$.

Let $S_0$ be an ovaloid, i.e., a compact $C^{\8}$ surface in $\R^3$ with $K>0$ at every point. We say that an immersed surface $\Sigma$ in $\R^3$ is \emph{quasiconformal with respect to $S_0$} if the following inequality holds at every $q\in \Sigma$, for some $\Lambda\leq -1$:
 \begin{equation}\label{cuasis}
(\kappa_1-\kappa_1^0)^2 +(\kappa_2-\kappa_2^0)^2 \leq 2 \Lambda(\kappa_1-\kappa_1^0)\, (\kappa_2-\kappa_2^0).
\end{equation}
Here, $\kappa_1\geq \kappa_2$ are the principal curvatures of $\Sigma$ at $q$, and $\kappa_1^0 \geq \kappa_2^0$ are the principal curvatures of $S_0$ at the unique point $q_0\in S_0$ whose unit normal agrees with the unit normal $N(q)$ of $\Sigma$ at $q$. Note that, by their own definition, $\kappa_1^0,\kappa_2^0$ are viewed here as maps from $\Sigma$ to $\R$.

One should observe that if $S_0$ is a sphere of radius $1/c$, then \eqref{cuasis} coincides with \eqref{cuasi}, i.e., with the notion of quasi-CMC surface.

We have then the following \emph{anisotropic} extension of Theorem \ref{th:main1}.

\begin{theorem}\label{th:aniso}
Let $S_0$ be an ovaloid. Then, any immersed sphere $\Sigma$ in $\R^3$ that is quasiconformal with respect to $S_0$ is a translation of $S_0$.
\end{theorem}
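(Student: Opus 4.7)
The plan is to follow the same strategy as in the proof of Theorem \ref{th:main1}, replacing the totally umbilical reference surface $S_c$ by suitable translates of the ovaloid $S_0$. The key tautological observation is that any translate $S_0^\ast$ of $S_0$ automatically satisfies $\kappa_i\equiv\kappa_i^0$ at every point, since translation preserves the Gauss map; hence translates of $S_0$ are exact solutions of the anisotropic inequality \eqref{cuasis}. First I would introduce the symmetric $(1,1)$-tensor $B:=A_\Sigma-A_{S_0}^\ast$ on $T\Sigma$, where $A_{S_0}^\ast(q)$ is the shape operator of $S_0$ at the unique point with normal $N(q)$, viewed on $T_q\Sigma$ via the parallel identification of tangent planes in $\R^3$, and the generalized umbilic set $\cU:=\{q\in\Sigma:B(q)=0\}$. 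A direct computation from \eqref{cuasis} with $\Lambda\leq -1$ shows that $B$ can never be a nonzero multiple of the identity: if $B(q)=\mu\,\mathrm{Id}$ then $\kappa_i-\kappa_i^0=\mu$ for $i=1,2$, and \eqref{cuasis} forces $2\mu^2\leq 2\Lambda\mu^2$, hence $\mu=0$. Consequently the eigenline fields of $B$ are well defined continuous line fields on $\Sigma-\cU$.

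The core step is the anisotropic analog of Theorem \ref{localth}: at each $p\in\cU$, either $\Sigma$ coincides with a translate of $S_0$ on a neighborhood of $p$, or $p$ is an isolated zero of $B$ around which the eigenline fields of $B$ have non-positive index. To prove it, let $S_0^\ast$ be the translate of $S_0$ tangent to $\Sigma$ at $p$ with matching orientation; since $B(p)=0$ the shape operators of $\Sigma$ and $S_0^\ast$ agree at $p$. Writing both surfaces as graphs $z=u(x,y)$ and $z=u^0(x,y)$ over the common tangent plane and setting $h:=u-u^0$, we have $h(0)=0$, $\nabla h(0)=0$, and $D^2 h(0)=0$. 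Rewriting \eqref{cuasis} via \eqref{cuasim} in terms of the anisotropic Weingarten functionals $\cW_i(\kappa_1,\kappa_2,q):=-m_i(\kappa_1-\kappa_1^0(q))+\kappa_2-\kappa_2^0(q)$, one selects $\tau:\Sigma\to[0,1]$ such that $\cW^\tau:=\tau\cW_1+(1-\tau)\cW_2$ vanishes along $\Sigma$. The same $\tau$ works for $S_0^\ast$ since $\cW_1\equiv\cW_2\equiv 0$ on any translate of $S_0$. In graph form $\cW^\tau[u]=0$ is a uniformly elliptic fully nonlinear PDE (uniform ellipticity follows exactly as from \eqref{eliweit}) whose coefficients depend smoothly on the normal direction, so the standard mean value theorem argument shows that $h$ satisfies a linear, uniformly elliptic equation $L[h]=0$ with bounded coefficients, placing it under the hypotheses of Theorem \ref{lemanal2}.

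By Theorem \ref{lemanal2}, $h$ has a zero of finite order $k\geq 3$ at the origin, $\det(D^2 h)<0$ on a punctured disk, and $\nabla h_x,\nabla h_y$ have a common non-positive index. Adapting the computation leading to \eqref{eq:a3}, the difference of graph shape operator matrices of $\Sigma$ and $S_0^\ast$ satisfies $\alfa_{ij}-\alfa^0_{ij}=(D^2 h)_{ij}+o(\varrho^{k-2})$; moreover the pullback $A_{S_0}^\ast$ and the graph shape operator of $S_0^\ast$ differ only by $O(|\nabla h|)=O(\varrho^{k-1})$, of strictly lower order than $D^2 h=O(\varrho^{k-2})$. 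Hence the leading behavior of $B$ near $p$ is that of $D^2 h$, and the eigenline fields of $B$ have at $p$ the same local index as those of the symmetric tensor $D^2 h$ at the origin. By the index identification at the end of the proof of Theorem \ref{localth}, the latter index coincides with that of the vector field $Z=(-2h_{xy},h_{xx}-h_{yy})$, which in turn agrees with that of $\nabla h_y$ because $\langle Z,\nabla h_y\rangle<0$, and is therefore non-positive.

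The global argument then mirrors that of Theorem \ref{th:main1}: if $\cU$ has non-empty interior, a boundary point would contradict the local isolatedness, so $\cU$ is both open and closed in $\Sigma$, and connectedness forces $B\equiv 0$, which is precisely the statement that $\Sigma$ is a translate of $S_0$. Otherwise $\cU$ is finite, and Poincar\'e--Hopf applied to the continuous eigenline fields of $B$ on $\Sigma\cong\S^2$ gives total index $\chi(\Sigma)=2$, contradicting the non-positivity of all local indices. The main obstacle I anticipate is the careful comparison between $A_{S_0}^\ast$ (the Gauss-map pullback, which lives globally on $\Sigma$) and the graph shape operator of the chosen translate $S_0^\ast$ (which lives only locally), verifying that their discrepancy is genuinely absorbed into the $o(\varrho^{k-2})$ remainder so that the local index computation performed for $D^2 h$ transfers unambiguously to the globally defined tensor $B$; once this is in place, the rest is bookkeeping on top of the framework established in Theorem \ref{localth}.
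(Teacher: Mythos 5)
Your proposal is correct and follows essentially the same route as the paper's proof: compare $\Sigma$ locally with the tangent translate of $S_0$ written as graphs, use the $\cW^\tau$ linearization to get a uniformly elliptic equation with bounded coefficients for $h=u-u^0$, apply Theorem \ref{lemanal2}, transfer the non-positive index to the eigenline fields of a globally defined comparison tensor, and conclude with Poincar\'e--Hopf. The only (harmless) variation is your choice of comparison tensor $B=A_\Sigma-A_{S_0}^\ast$ instead of the paper's $\alfa=(II-II_0)\cdot II_0^{-1}$ from \cite{GM3}: this makes the leading term of the tensor exactly $D^2w$, so the index computation avoids the $\beta_1\leq\beta_2$ weights, and your explicit verification that $B$ cannot be a nonzero multiple of the identity correctly identifies the singular set of the line fields with the contact set, just as the paper needs for its version.
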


For the particular case where the immersed sphere $\Sigma$ is also an ovaloid, Theorem \ref{th:aniso} follows from the uniqueness result of Han, Nadirashvili and Yuan in \cite{HNY}, as we explain next. Assume that $\Sigma$ is an ovaloid, and let $h,h_0$ be the support functions of $\Sigma$ and $S_0$, viewed as functions on $\S^2$. Then $h-h_0$ extends by homogeneity to a homogeneous function $v$ of degree $1$ in $\R^3-\{0\}$. It can be shown that $v$ satisfies a linear, uniformly elliptic equation $\sum A_{ij}D_{ij}v=0$ in $\R^3$ (with bounded, measurable coefficients) if and only if $\Sigma$ satisfies \eqref{cuasis}. By \cite{HNY}, any such $v$ is linear. So, $\Sigma$ is a translation of $S_0$. One should note that, even though the homogeneous equation in $\R^3$ for $v$ considered in \cite{HNY} is equivalent to \eqref{cuasis} (for ovaloids), the regularity on the solution $v$ imposed in \cite{HNY} is much weaker than the $C^{\8}$ regularity that we ask here. On the other hand, the result in \cite{HNY} does not cover the general case of immersed spheres given by Theorem \ref{th:aniso}.

\begin{proof}
Let $II$ be the second fundamental form of the \emph{fixed} surface $\Sigma$, and $II_0$ be the second fundamental form of the \emph{osculating} ovaloid $S_0$ to $\Sigma$ at each point.  Specifically, $II_0(q)$ is, for each $q\in \Sigma$, the second fundamental form of $S_0$ at the point $q_0\in S_0$ that has unit normal equal to $N(q)$. Note that we can view $II_0$ as a Riemannian metric on $\Sigma$. 

Let $\cU_0:=\{q \in \Sigma : II(q)=II_0(q)\}.$ That is,  $\cU_0$ is the set of points where the immersed sphere $\Sigma$ has a contact of order at least two with some translation of the ovaloid $S_0$.  

From now on, we fix an arbitrary point $p\in \cU_0\subset \Sigma$, and consider a sufficiently small neighborhood $V\subset \Sigma$ of $p$. We will begin following closely the proof of Theorem \ref{localth}.

To start, we rewrite \eqref{cuasis} as 
\begin{equation}\label{cuasiman}
m_2(\kappa_1-\kappa_1^0)\leq \kappa_2-\kappa_2^0 \leq m_1 (\kappa_1-\kappa_1^0),\end{equation}
for constants $m_i<0$, where $\kappa_1,\kappa_2,\kappa_1^0,\kappa_2^0$ are defined on $V$. Denote next, for $i=1,2$,
 \begin{equation}\label{def:weian}
\cW_i (\kappa_1,\kappa_2,q):= \kappa_2-\kappa_2^0 - m_i(\kappa_1-\kappa_1^0),
 \end{equation} 
both of them defined on the set $\{(x,y,q):x\geq y, q\in V\}\subset \R^2\times V$. 

By \eqref{cuasiman}, we have on $V$ that $\cW_1 (\kappa_1(q),\kappa_2(q),q)\leq 0$ and $\cW_2 (\kappa_1(q),\kappa_2(q),q)\geq 0$. So, for each $q\in V$ there exists a value $\tau=\tau(q)\in [0,1]$ such that 
 \begin{equation}\label{weitan}
 \cW^{\tau} (\kappa_1(q),\kappa_2 (q),q) =0 \hspace{0.5cm} \text{for all $q\in V$},
 \end{equation}
where 
 \begin{equation}\label{def:weitan}
 \cW^{\tau} (\kappa_1,\kappa_2,q):= \tau  \cW_1+ (1-\tau) \cW_2.
 \end{equation}

Choose next Euclidean coordinates $(x,y,z)$ in $\R^3$ so that both $\Sigma$ and $S_0$ can be seen, respectively, as graphs $z=u(x,y)$ and $z=u_0(x,y)$ in these coordinates. We can also assume that, in these coordinates, $p=(0,0,0)$, the common unit normal of $\Sigma$ and $S_0$ at $p$ is $(0,0,1)$, and $II(p) =II_0(p)$ is a (positive definite) diagonal matrix $B$. Then, by \eqref{weitan}, $u(x,y)$ is a solution to a second order PDE
\begin{equation}\label{def:phitauan}
\Phi^{\tau}[u]:=\Phi^{\tau} (x,y,u_x,u_y,u_{xx},u_{xy},u_{yy})=0.
\end{equation}

At this point we can discuss the ellipticity of \eqref{def:phitauan} in the very same way that we did in the proof of Theorem \ref{localth} for the equation \eqref{def:phitau}. Indeed, the definitions of $\cW_i$ and $\cW^{\tau}$ in this general case do not alter the dependence with respect to second order derivatives of the corresponding objects \eqref{def:wei}, \eqref{def:weit}, and so the uniform ellipticity discussion is the same as in Theorem \ref{localth}.

In particular, if we define $h:=u-u_0$, then arguing still as in Theorem \ref{localth}, there exists a small disk $D_{\ep}(0)$ around the origin so that $h(x,y)$ is a solution to a linear uniformly elliptic equation with bounded measurable coefficients 
$$L[h]:=a_{11}h_{xx} + 2 a_{12}h_{xy} + a_{22}h_{yy} + b_1h_x + b_2h_y =0.$$

Note that $h$, $\nabla h$ and $D^2 h$ vanish at $(0,0)$. \emph{Assume for the moment that $h$ is not identically zero around the origin}. Then, by Theorem \ref{lemanal2}, we have that $h_{xx}h_{yy}-h_{xy}^2<0$ in a punctured neighborhood of the origin, and that the vector fields $\nabla h_x$, $\nabla h_y$ have non-positive index at $(0,0)$. Moreover, $h(x,y)$ has at the origin a zero of finite order (by Bers-Nirenberg), and so we can write for $\varrho:=\sqrt{x^2+y^2}$
 \begin{equation}\label{finor}
 h(x,y)= w(x,y) + o(\varrho^k)
 \end{equation}
where $w(x,y)$ is a homogeneous polynomial of degree $k\geq 3$. It also holds for $w$ that ${\rm det}(D^2 w)<0$ in $\R^2-\{(0,0)\}$, and that the index of the vector fields $\nabla w_x$, $\nabla w_y$ is non-positive at the origin.

Following a previous construction by the first two authors in arbitrary Riemannian $3$-manifolds, cf. \cite[Eq. (3.3)]{GM3}, we consider the smooth symmetric bilinear form $$\sigma:=II-II_0 : T\Sigma\times T\Sigma\flecha C^{\8}(\Sigma),$$ which compares the \emph{fixed} second fundamental form $II$ with the \emph{osculating} second fundamental form $II_0$. In these conditions, if we write the expression of $\sigma$ with respect to the graphical coordinates $(x,y)$, equation (3.13) in \cite{GM3} shows that we have the asymptotic expansion
\begin{equation}\label{finorr}
\sigma=D^2 w(x,y) + o(\varrho^{k-2}).
\end{equation}
Here, the key point for the validity of \eqref{finorr} in our situation is that we already proved that $h(x,y)$ satisfies \eqref{finor} in our general quasiconformal setting; so we do not need that $\Sigma$ and $S_0$ satisfy a $C^{1,\alfa}$ elliptic PDE, as is the case in \cite{GM3}.

Consider finally the endomorphism $\alfa:\X(\Sigma)\flecha \X(\Sigma)$ given by $II_0(\alfa(X),Y)=\sigma(X,Y)$ for all $X,Y\in \X(\Sigma)$. Note that $\alfa$ is diagonalizable at every point, and the matrix expressions of $\alfa$, $\sigma$ and $II_0$ with respect to the $(x,y)$ coordinates are related by $\alfa=\sigma\cdot II_0^{-1}$. Also, denote in these coordinates $$B^{-1}= II_0^{-1}(p)=\left(\begin{array}{cc} \beta_1 & 0 \\ 0 & \beta_2\end{array}\right),$$ where $\beta_1,\beta_2>0$. Up to a rotation in the $(x,y)$-coordinates, we can assume that $\beta_1\leq \beta_2$. The line fields on $\Sigma$ described by the eigendirections of $\alfa$ are given by \eqref{plf}. Then, by \eqref{finorr},  these are arbitrarily well approximated around the origin by the line fields $$-\beta_2 w_{xy} dx^2 +(\beta_1 w_{xx}- \beta_2 w_{yy}) dx dy + \beta_1 w_{xy} dy^2=0.$$ Arguing as in the final part of the proof of Theorem  \ref{localth}, the index of these line fields agrees with the index of the vector field $Z:=(-2\beta_2 w_{xy}, \beta_1 w_{xx} - \beta_2 w_{yy})$. Now, using that $\beta_1\leq \beta_2$ and $D^2 w<0$ in $\R^2-\{(0,0)\}$, we obtain $\esiz Z,\nabla w_y\esde <0$ in $\R^2-\{(0,0)\}$. Hence, this index agrees with the one of $\nabla w_x$ and $\nabla w_y$, which we know is non-positive.

Therefore, we have proved that if $h(x,y)$ is not identically zero around $p\in \cU_0$, then $p$ is an isolated point of $\cU_0$ and the principal line fields of the tensor $\alfa$ have non-positive index at $p$. A standard connectedness argument shows then that either $h=u-u_0$ vanishes around any $p\in \cU_0\subset \Sigma$, what implies that $\Sigma=S_0$ up to a translation, or that the principal line fields of $\alfa$ have on $\Sigma$ a finite number of singularities with non-positive index. Since $\Sigma$ has positive Euler characteristic, this second possibility contradicts the Poincaré-Hopf theorem. Thus, we deduce that $\Sigma=S_0$ up to a translation.
\end{proof}

Observe that the proof of Theorem \ref{th:aniso} only uses the hypothesis that $\Sigma$ is quasiconformal with respect to $S_0$ in an open domain of $\Sigma$ that contains the set $\cU_0$. Thus, Theorem \ref{th:aniso} holds under more general conditions, in the spirit of Remark \ref{rem:1}. We omit the exact statement.

Theorem \ref{th:aniso} generalizes several known uniqueness theorems for immersed spheres modeled by elliptic PDEs, and in particular, it implies the solution by the first two authors in \cite{GM3} of Alexandrov's uniqueness conjecture (\cite{A}) for immersed spheres of prescribed curvature in $\R^3$. One example of such a consequence is given in the result below, where $\kappa_1\geq \kappa_2$ and $\eta$ denote, respectively, the principal curvatures and the unit normal of an immersed oriented surface in $\R^3$.

\begin{corollary}(\cite{GM3})\label{cor:anisoo}
Let $\Sigma_0,\Sigma_1$ be two immersed spheres in $\R^3$ that satisfy a prescribed curvature equation
 \begin{equation}\label{prescrin}
W(\kappa_1,\kappa_2,\eta)=0,
\end{equation} 
where $W$ is $C^1$ in the set $\{(x,y,\nu)\in \R^2\times \S^2: x\geq y\}$, and satisfies the ellipticity condition $W_x W_y>0$ on $W^{-1}(0)$.

Assume that $\Sigma_0$ is an ovaloid. Then $\Sigma_1$ is a translation of $\Sigma_0$.
\end{corollary}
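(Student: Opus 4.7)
The plan is to apply Theorem \ref{th:aniso}, or more precisely, the extension noted in the remark after its proof, where quasiconformality is only needed in a neighborhood of $\cU_0 = \{q \in \Sigma_1 : II(q) = II_0(q)\}$. First, I would arrange that $\cU_0$ is non-empty by translating $\Sigma_0$ to a first point of tangency with $\Sigma_1$; the strict ellipticity $W_xW_y>0$ on $W^{-1}(0)$, together with the common equation $W=0$, then forces the second fundamental forms to agree at such a tangency point, placing it in $\cU_0$.

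The heart of the proof is to show that $\Sigma_1$ is quasiconformal with respect to $\Sigma_0$ in a neighborhood $V$ of $\cU_0$. For each $q \in \Sigma_1$ with unit normal $\nu = N(q)$, let $\kappa_i^0(\nu)$ denote the principal curvatures of $\Sigma_0$ at the unique point with that normal (well-defined, since $\Sigma_0$ is an ovaloid). Both $(\kappa_1(q), \kappa_2(q), \nu)$ and $(\kappa_1^0, \kappa_2^0, \nu)$ lie in $W^{-1}(0)$, and the fundamental theorem of calculus along the segment in the $(\kappa_1, \kappa_2)$-plane from $(\kappa_1^0, \kappa_2^0)$ to $(\kappa_1, \kappa_2)$ yields the linearized identity
$$A(q)\,(\kappa_1 - \kappa_1^0) + B(q)\,(\kappa_2 - \kappa_2^0) = 0,$$
where $A(q), B(q)$ are the integrals of $W_x, W_y$ along this segment at fixed $\nu$.

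Next, I would argue that $A, B$ have the same sign and the ratio $A/B$ is bounded above and below by positive constants on a suitably small $V$. By the $C^1$ regularity of $W$ and the assumption $W_xW_y>0$ on $W^{-1}(0)$, this ellipticity extends by continuity to an open neighborhood of the compact portion of $W^{-1}(0)$ relevant to the problem. At each $q \in \cU_0$ the two endpoints of the segment coincide, so by continuity of the principal curvatures of $\Sigma_1$, shrinking $V$ ensures that the entire segment stays inside the ellipticity neighborhood. After replacing $W$ by $-W$ if necessary, one has $A, B > 0$ uniformly on $V$, with a bounded ratio.

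Substituting back, the linear relation immediately yields $(\kappa_1 - \kappa_1^0)(\kappa_2 - \kappa_2^0) \leq 0$ on $V$, together with a uniform $\Lambda \leq -1$ (obtained via the AM–GM bound on $A/B + B/A$) for which the quasiconformal inequality \eqref{cuasis} holds on $V$. The extension of Theorem \ref{th:aniso} then concludes that $\Sigma_1 = \Sigma_0 + v$ for some $v \in \R^3$, as desired. The main obstacle is the ellipticity control in the previous step: the hypothesis only provides $W_xW_y>0$ on the zero set $W^{-1}(0)$, not on straight segments in the principal-curvature plane, and globally $(\kappa_1,\kappa_2)$ and $(\kappa_1^0,\kappa_2^0)$ can be far apart. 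Confining the argument to a neighborhood of $\cU_0$ resolves this — continuity makes the endpoints, and hence the whole segment, arbitrarily close to $W^{-1}(0)$ — which is also why one must arrange $\cU_0 \neq \emptyset$ at the outset.
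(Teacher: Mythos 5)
Your proposal is correct and follows essentially the same route as the paper: a mean value theorem linearization $A(\kappa_1-\kappa_1^0)+B(\kappa_2-\kappa_2^0)=0$ with $W_xW_y>0$ extended by $C^1$-continuity to a neighborhood of the relevant compact part of $W^{-1}(0)$, yielding the wedge inequality \eqref{cuasiman}/\eqref{cuasis} near $\cU_0$, and then the (locally-hypothesized) version of Theorem \ref{th:aniso}. The only superfluous part is the initial translation to a first tangency point: $\cU_0$ is defined through the Gauss map and is unchanged by translating $\Sigma_0$, and Theorem \ref{th:aniso} does not require $\cU_0\neq\emptyset$ as an input (if $\cU_0$ were empty, the line fields of $\alfa$ would be nonsingular on a sphere, already contradicting Poincar\'e--Hopf), so that step --- whose justification would anyway face the same off-$W^{-1}(0)$ ellipticity issue you raise --- can simply be dropped.
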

One should note that $W$ is not assumed symmetric in the $x,y$ variables. Equation \eqref{prescrin} contains as particular cases the elliptic Weingarten equation $W(\kappa_1,\kappa_2)=0$, the prescribed mean curvature equation $H=\cH(\eta)$ or the \emph{Minkowski problem equation} $K=\cK(\eta)>0$. The hypothesis in Corollary \ref{cor:anisoo} that $\Sigma_0$ is an ovaloid cannot be removed, see the example in \cite[p.460]{GM3} or \cite[Section 5.1]{GM}. This example also shows that Theorem \ref{th:aniso} does not hold if the convex sphere $S_0$ is only assumed to have non-negative curvature, $K\geq 0$.

\begin{proof}
Take $(x_1^0,x_2^0,\nu)\in W^{-1}(0)$, and consider a small neighborhood of this point where $a\leq W_{x_i}\leq b$ for $i=1,2$ and positive constants $a,b$. Then, for any other point of the form $(x_1,x_2,\nu)\in W^{-1}(0)$ in this neighborhood, we have by the mean value theorem
$$0=W(x_1,x_2,\nu)-W(x_1^0,x_2^0,\nu) = \sum_{i=1}^2 A_ i(x_i-x_i^0),$$ where $A_i=A_i(x_1,x_2,x_1^0,x_2^0,\nu)$, with $a\leq A_i\leq b$. In particular, $$-\frac{b}{a} (x_1-x_1^0)\leq x_2-x_2^0 \leq - \frac{a}{b} (x_1-x_1^0).$$

These inequalities guarantee that if $\Sigma_0,\Sigma_1$ are as in the statement of Corollary \ref{cor:anisoo}, then $S_0:=\Sigma_0$ and $\Sigma:=\Sigma_1$ are in the conditions of Theorem \ref{th:aniso}; see e.g. \eqref{cuasiman}. Thus, the result follows immediately from Theorem \ref{th:aniso}.
\end{proof}
\appendix

\section{Appendix: Examples}

It is well-known that umbilics of CMC surfaces in $\R^3$, and more generally of elliptic Weingarten surfaces of the form $W(H,K)=0$, are isolated and of negative index. We prove next that, even though non-totally umbilical quasi-CMC surfaces only have isolated umbilics (Theorem \ref{localth}), these umbilics can actually have index zero. In particular, this bifurcation from the classical theories suggests that there could exist quasi-CMC tori in $\R^3$ with a finite, non-empty umbilic set.

We thank Giovanni Alessandrini and Albert Clop for helpful conversations regarding the existence of this example.

\begin{lemma}\label{lem:eje1}
Consider the homogeneous polynomial of degree $6$ $$h(x,y)= x y (x^2+y^2) (x^2 +16y^2).$$ Then, the graph $z=h(x,y)$ is a quasiminimal surface in a neighborhood of the origin, and it has at the origin an isolated umbilic of index zero.
\end{lemma}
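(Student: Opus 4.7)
The plan is to prove the three claims --- quasiminimality, isolation of the umbilic, and vanishing of its index --- by reducing everything to leading order, namely to the Hessian $D^2 h$ at the origin, and then carrying out explicit polynomial manipulations. Expand $h(x,y)=x^5 y+17 x^3 y^3+16 x y^5$ and compute
\begin{equation*}
h_{xx}=20 x^3 y+102 x y^3,\qquad h_{yy}=102 x^3 y+320 x y^3,\qquad h_{xy}=5 x^4+153 x^2 y^2+80 y^4.
\end{equation*}
Since $h$ vanishes to order $6$ at the origin, the graph $z=h(x,y)$ has the $xy$-plane as its tangent plane there with vanishing second fundamental form, so the origin is an umbilic with $\kappa_1=\kappa_2=0$ (the case $c=0$ of \eqref{cuasi}). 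The key pointwise observation is that $h_{xy}>0$ on $\R^2\setminus\{(0,0)\}$: viewed as a quadratic in $X=x^2$ with leading coefficient $5$ and discriminant $(153^2-4\cdot 5\cdot 80)\,y^4=21809\,y^4$, both roots $X=\tfrac{1}{10}(-153\pm\sqrt{21809})\,y^2$ are strictly negative (since $\sqrt{21809}<153$), so $h_{xy}$ is strictly positive for $X\ge 0$ away from the origin.

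For the quasiminimality, since $\nabla h=O(r^5)$ and $D^2 h=O(r^4)$ with $r=\sqrt{x^2+y^2}$, the standard formulas for the mean and Gaussian curvatures of a graph yield
\begin{equation*}
H=\tfrac{1}{2}(h_{xx}+h_{yy})+O(r^{14}),\qquad K=h_{xx}h_{yy}-h_{xy}^2+O(r^{18}),
\end{equation*}
so up to higher-order perturbations the ratio $H^2/(H^2-K)$ equals the Hessian-based ratio
\begin{equation*}
\frac{(h_{xx}+h_{yy})^2}{(h_{xx}-h_{yy})^2+4 h_{xy}^2}=\frac{t(61+211t)^2}{t(41+109t)^2+(5+153t+80t^2)^2}=:g(t),\qquad t=y^2/x^2.
\end{equation*}
Quasiminimality \eqref{cua0} (with $c=0$ and some $\mu<1$) amounts to $\sup_{t\ge 0}g(t)<1$, which reduces to the strict positivity on $[0,\infty)$ of
\begin{equation*}
f(t):=(5+153t+80t^2)^2+t(41+109t)^2-t(61+211t)^2=25-510 t+7405 t^2-8160 t^3+6400 t^4.
\end{equation*}
I would certify this via the sum-of-squares identity
\begin{equation*}
f(t)=(80 t^2-51 t)^2+(4804 t^2-510 t+25),
\end{equation*}
noting that the quadratic $4804 t^2-510 t+25$ has discriminant $510^2-4\cdot 4804\cdot 25=-220300<0$ and hence is strictly positive on all of $\R$. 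Since $g$ is continuous on $[0,\infty]$ with $g(0)=g(\infty)=0$, the maximum $\mu_0:=\max g$ is strictly less than $1$, and any $\mu\in(\mu_0,1)$ realizes \eqref{cua0} on a sufficiently small punctured disk once the higher-order corrections are absorbed.

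For the remaining claims, isolation of the umbilic of the graph follows from $(h_{xx}-h_{yy})^2+4 h_{xy}^2\ge 4 h_{xy}^2>0$ off the origin, transferred to the actual principal curvatures via the same higher-order estimate. Finally, by the argument in the last step of the proof of Theorem \ref{localth}, the index of the principal line fields at the umbilic equals (one half of) the winding number around the origin of the vector field $Z=(-2 h_{xy},\,h_{xx}-h_{yy})$; but the positivity of $h_{xy}$ forces the first coordinate of $Z$ to be strictly negative on $\R^2\setminus\{(0,0)\}$, confining $Z$ to the open left half-plane along every small circle about the origin, so its winding number --- and hence the umbilic index --- is zero. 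The main obstacle is the polynomial positivity of $f(t)$: the sum-of-squares identity above is the crucial algebraic input, and the particular coefficient $16$ in the factor $x^2+16 y^2$ defining $h$ is precisely what makes both this decomposition succeed and $h_{xy}$ a positive-definite quartic form.
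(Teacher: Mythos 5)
Your proof is correct and follows essentially the same route as the paper's: reduce $H$ and $K$ to the Hessian of the homogeneous polynomial, establish the uniform inequality $(h_{xx}+h_{yy})^2\leq \mu\,[(h_{xx}-h_{yy})^2+4h_{xy}^2]$ for some $\mu<1$ (equivalently $h_{xx}h_{yy}-h_{xy}^2<0$ away from the origin, propagated by homogeneity and absorbed against the higher-order graph corrections), and deduce index zero from the positivity of $h_{xy}$, which pins the relevant vector field to a half-plane so that its winding number vanishes. The only real difference is cosmetic: where the paper invokes a polar-coordinate computation on $r=1$ together with compactness, you certify the same inequality by the explicit sum-of-squares identity $f(t)=(80t^2-51t)^2+(4804t^2-510t+25)$, a perfectly valid (and easily checkable) substitute.
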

\begin{proof}
A computation using polar coordinates $(r,\theta)$ in the plane shows that $h_{xx} h_{yy}-h_{xy}^2<0$ holds when $r=1$. In particular, this implies that 
\begin{equation}\label{ap:ho}
(h_{xx} + h_{yy})^2 \leq \mu \{ (h_{xx} - h_{yy})^2 + 4 h_{xy}^2\}
\end{equation}
holds when $r=1$, for some constant $\mu<1$. By homogeneity, these equations hold globally in $\R_*^2 :=\R^2-\{(0,0)\}$. In particular, the graph $z=h(x,y)$ has a unique umbilic, situated at the origin. One can check that $h_{xy}>0$ in $\R_*^2$, and this implies that the vector field $\nabla h_x =(h_{xx},h_{xy})$ has index zero around $(0,0)$. As was explained at the end of the proof of Theorem \ref{localth}, this is equivalent to the fact that the principal line fields of the graph around the origin have index zero, as claimed.

It remains to show that the graph $z=h(x,y)$ is quasiminimal in a neighborhood of the origin, i.e., it satisfies $H^2 \leq \mu (H^2-K)$ for some $\mu<1$. But this property follows directly from \eqref{ap:ho} and the asymptotic expansions $$H^2-K = \frac{1}{4}(h_{xx} - h_{yy})^2 +  h_{xy}^2 + o(r^8), \hspace{0.5cm} H = \frac{1}{2} (h_{xx} + h_{yy}) + o(r^4),$$ what proves the Lemma.
\end{proof}

Let us remark that, by \eqref{ap:ho}, $h(x,y)$ is a solution to a linear, uniformly elliptic equation \eqref{linpde} with bounded (discontinuous) coefficients $a_{ij}$, and with $b_1=b_2=0$, which has at the origin a critical point of index zero. In particular, the origin is not a \emph{geometric critical point} in the sense of \cite{AM}. This contrasts with the local behavior around critical points of solutions to elliptic equations \eqref{linpde} with smooth ($C^1$, Holder, etc.) coefficients, given by Bers, Carleman, Hartman-Wintner and others, since in these cases the index is always negative at critical points.

In the next example we construct a wide family of smooth compact surfaces of genus zero embedded in $\R^3$ that satisfy the Alexandrov inequality \eqref{ecur1}. This proves that the quasi-CMC condition \eqref{cuasi} in Theorem \ref{th:main1} cannot be weakened to its natural limit, given by \eqref{ecur1}.
These examples are similar to a construction in \cite{Mu}.
\begin{figure}[htbp]
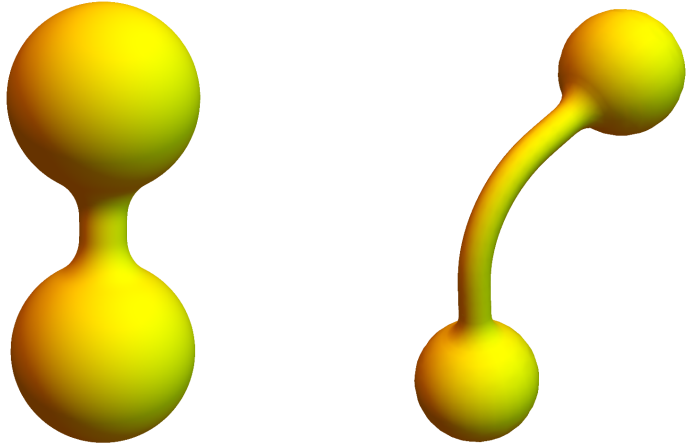

    \centering
         \includegraphics[width=4cm]{fia1.png}  \hspace{1.5cm} \includegraphics[width=4.4cm]{fia2.png}
    \caption{Smooth embedded spheres satisfying the Alexandrov inequality \eqref{ecuno}}
    \label{fig:contra}
\end{figure}

\begin{lemma}\label{lem:ej2}
There exist non-round $C^{\8}$ spheres embedded in $\R^3$ satisfying 
\begin{equation}\label{ecuno}
(\kappa_1-1)(\kappa_2-1)\leq 0, \hspace{0.5cm} \text{and equality holds only if $\kappa_1=\kappa_2=1$}.
\end{equation}
\end{lemma}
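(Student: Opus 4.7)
By the real-analytic rigidity quoted at the start of this section (Satz III in \cite{Mu}), any real-analytic immersed sphere satisfying \eqref{ecuno} is a unit sphere. Therefore, the counterexamples we seek are necessarily $C^{\infty}$ but not real analytic, and the construction must involve $C^\infty$ bump functions (partitions of unity) that are not analytic. My plan is to glue a small non-spherical perturbation into the unit sphere $\esf^2$, supported in a geodesic disk, and to verify the inequality on the perturbed region by a direct computation.

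I would work in the support function formulation. For a smoothly embedded strictly convex sphere $\Sigma$ close to $\esf^2$, write its support function as $h = 1 + u$ with $u \in C^{\infty}(\esf^2)$; the two principal radii of curvature of $\Sigma$ are the eigenvalues of $\mathrm{Id} + M$, where $M := \nabla^2 u + u\, g$, and $\nabla^2, g$ are the Hessian and round metric on $\esf^2$. A direct computation shows that \eqref{ecuno}, with equality only at $\kappa_1 = \kappa_2 = 1$, is equivalent to $\det M \leq 0$ on $\esf^2$, with $\det M = 0$ only at points where $M$ itself vanishes. The task reduces to producing a nonzero $u \in C^{\infty}(\esf^2)$, supported in a geodesic disk $\Omega \subset \esf^2$ (so that $\Sigma$ agrees with $\esf^2$ outside $\Omega$), satisfying this sign condition.

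A preliminary observation constrains the ansatz: the rotationally symmetric case $u = u(\phi)$ fails. Writing $R_m = h + h_{\phi\phi}$, $R_p = h + \cot\phi\, h_\phi$, the identity $R_p' = \cot\phi\,(R_m - R_p)$ combined with $(R_m - 1)(R_p - 1) \leq 0$ forces the function $\sin^2\phi\,(R_p - 1)^2$ to be monotone on each hemisphere; since it vanishes at the poles it must vanish identically, giving $R_m \equiv R_p \equiv 1$, i.e., $\Sigma$ is a translate of $\esf^2$. Consequently, $u$ must depend non-trivially on the azimuthal angle. Following the construction in \cite{Mu}, I would take
\[
u(\phi,\theta) \;=\; \psi(\phi)\, f(\phi)\cos(k\theta),
\]
for a fixed integer $k \geq 2$, with $\psi$ a $C^{\infty}$ cutoff vanishing to infinite order at $\partial\Omega$ and $f$ to be chosen. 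Writing $M$ in the orthonormal frame $(\partial_\phi,\, \partial_\theta/\sin\phi)$ gives explicit expressions for its entries in terms of $\psi,f,k$, so that the problem reduces to a one-dimensional prescription of the eigenvalues of $M$ through the choice of $f$.

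The \textbf{main obstacle} is the global verification of $\det M < 0$ on $\Omega$, with equality only on a discrete set of type-$1$ umbilics: the interplay between the Hessian $\nabla^2 u$ and the pointwise term $u\,g$ typically produces unwanted sign changes of $\det M$ at turning points of $f$ or in the transition region where $\psi$ decays. This is controlled by tuning $k$ and the profile $\psi f$ following the computation in \cite{Mu}, producing an explicit but non-analytic choice; this non-analyticity is necessary in view of Satz III of \cite{Mu}. Once such a $u$ is obtained, for $\|u\|_{C^2}$ small the resulting convex body is strictly convex with $h > 0$, so $\Sigma$ is a $C^\infty$ embedded sphere; it is non-round because $u \not\equiv 0$, and satisfies \eqref{ecuno} with equality only on the discrete zero set of $M$, which are precisely the type-$1$ umbilics of $\Sigma$, as required.
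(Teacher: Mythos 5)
Your reduction to the support function ($h=1+u$, $M=\nabla^2u+u\,g$, condition $\det M\le 0$ with equality only where $M=0$) is correct as far as it goes, but it silently restricts the search to \emph{convex} spheres, and this is where the proposal breaks down. The decisive step is never carried out: producing a nonzero, compactly supported $u$ with $\det M\le 0$ and equality only where $M$ vanishes is precisely the construction of a $C^{\8}$ convex counterexample to Alexandrov's condition \eqref{ecur1} (with $c=1$), a problem that stood open from \cite{A0} until the delicate counterexamples of Martinez-Maure \cite{MM} (only $C^2$) and Panina \cite{Pa1}; asserting that the sign of $\det M$ ``is controlled by tuning $k$ and the profile'' is not a proof, and the attribution of such a trigonometric support-function perturbation to \cite{Mu} is inaccurate (the construction in \cite{Mu}, like the paper's, is rotational). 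Moreover, your specific ansatz $u=g(\phi)\cos(k\theta)$, $g:=\psi f$, already meets a concrete obstruction: in the orthonormal frame, at azimuths where $\sin(k\theta)=0$ one gets $\det M=(g''+g)\bigl(\cot\phi\,g'+(1-k^2/\sin^2\phi)\,g\bigr)$, and near an edge of the support, where $g$ vanishes to infinite order, the dominant terms give $\det M\approx \cot\phi\,g''g'$, which is strictly positive near one of the two edges of any one-signed bump, for every $k\ge 2$. So at the very least a sign-changing, carefully controlled profile is required, and neither the global inequality nor the characterization of the equality set is verified.

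The deeper point is that your preliminary observation ruling out rotational symmetry is valid only under the convexity you imposed: the identity $R_p'=\cot\phi\,(R_m-R_p)$ and the monotonicity of $\sin^2\phi\,(R_p-1)^2$ use the parametrization of the surface by its Gauss map, i.e.\ that $\Sigma$ is an ovaloid. The paper drops convexity, and then a rotational construction works easily: it prescribes the curvature $\kappa(s)$ of a profile curve so as to obtain a rotational ``sandglass'' made of two unit spherical caps joined by a neck, on which the meridian curvature satisfies $\kappa<1$ (it even becomes negative), while the principal curvature along the parallels is shown, via a monotonicity argument, to exceed $1$ there; hence $(\kappa_1-1)(\kappa_2-1)<0$ on the neck and $\kappa_1=\kappa_2=1$ on the caps, with smoothness arranged by the choice of $\kappa(s)$, and further non-rotational examples follow by perturbing the neck. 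To repair your proposal you should either abandon convexity (after which an elementary rotational construction of this kind suffices), or genuinely carry out the convex support-function perturbation, which is a substantially harder task than the lemma requires.
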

\begin{proof}
Take $a,b\in (\frac{\pi}{2},\pi)$, with $a<b<a+\sin(a)$. Let $\kappa\in C^{\8}([0,b])$ satisfy that $\kappa(s)=1$ for all $s\in [0,a]$, $\kappa'(s)<0$ for all $s\in (a,b]$, and 
\begin{equation}\label{incu}
\int_a^b \kappa(s) ds =  \frac{\pi}{2}-a <0.
\end{equation}
Consider the curve $\alfa(s)=(x(s),z(s))$ in $\R^2$ parametrized by arc-length, given by $\alfa(0)=(0,0)$, $\alfa'(0)=(1,0)$, and the prescribed curvature function $\kappa(s)$. Then, $\alfa'(s)=(\cos \theta(s),\sin \theta(s))$ where $\theta'=\kappa$. Note that $\alfa(s)=(\sin s, 1-\cos s)$ for $s\in [0,a]$. In particular, since $x(a)=\sin(a)$ and $\alfa(s)$ is parametrized by arc-length, it follows from $b-a<\sin(a)$ that $x(s)>0$ for all $s\in (0,b]$, independently of our choice of $\kappa(s)$. The condition \eqref{incu} implies that $\alfa'(b)=(0,1)$. Since $\kappa'<0$ in $(a,b)$, \eqref{incu} does not hold for any $b'\in (a,b)$ and thus $\alfa'(s)\neq (0,1)$ if $s\in [0,b)$. Besides, since $\kappa\leq 1$, we have $\theta(s)<\pi$ for all $s$. Also, note that $\theta(s)\in (\pi/2,\pi)$ when $s\in (a,b)$, since \eqref{incu} does not hold for any $b'\in (a,b)$. In particular, $z(s)$ is increasing in $(a,b)$.

Then, we can glue $\alfa$ with its reflection across the $z=z(b)$ line in the $x,z$-plane, and rotate this curve around the $z$-axis to obtain an embedded rotational sphere $\Sigma$ in $\R^3$, which can obviously be constructed with $C^{\8}$ regularity by choosing $\kappa(s)$ adequately. One can think of $\Sigma$ as a rotational sandglass, made of two large spherical caps $S_1,S_2$ of radius $1$, joined by a neck $\cN$; see Figure \ref{fig:contra}, left.

We claim that any such embedded $\Sigma$ satisfies \eqref{ecuno}. First, it is clear that the principal curvature $\kappa(s)$ of $\Sigma$ corresponding to the meridian curves is $1$ in $S_1\cup S_2$, and smaller than $1$ in $\cN$. The principal curvature $\mu(s)$ corresponding to the parallels of $\Sigma$ is also $1$ in $S_1\cup S_2$, but is greater than $1$ in $\cN$ as we explain next. Since $\theta'=\kappa$, $x'=\cos (\theta)$, and $\kappa'<0$ in $(a,b)$, by differentiation  we have $x \kappa - \sin (\theta)<0$ in $(a,b)$. As the principal curvature $\mu(s)$ is given by $\mu=\sin (\theta)/x$, we conclude from the previous equations that $\mu'(s)>0$ in $(a,b)$. Therefore, $\mu(s)>1$ for all $s\in (a,b)$. So, $\Sigma$ satisfies \eqref{ecuno}, and this finishes the proof.

It is interesting to observe that any small normal variation of the sphere $\Sigma$ in a compact region of the interior of the neck $\cN$ also produces a smooth embedded sphere that satisfies \eqref{ecuno}, this time not rotational. So, there is a large class of smooth embedded spheres satisfying \eqref{ecuno}. One can also construct other examples by considering two spherical caps $S_1,S_2$ of radius $1$ with corresponding rotational \emph{half-necks} $\cN_1,\cN_2$ as above, but now joined by a long tube of fixed radius over a smooth regular curve in $\R^3$ (see Figure \ref{fig:contra}, right). We omit the details.
\end{proof}

\def\refname{References}

\vskip 0.2cm

\noindent José A. Gálvez

\noindent Departamento de Geometría y Topología,\\ Universidad de Granada (Spain).

\noindent  e-mail: {\tt jagalvez@ugr.es}

\vskip 0.2cm

\noindent Pablo Mira

\noindent Departamento de Matemática Aplicada y Estadística,\\ Universidad Politécnica de Cartagena (Spain).

\noindent  e-mail: {\tt pablo.mira@upct.es}

%\vskip 0.2cm

%\noindent Research partially supported by MINECO/FEDER Grant no. MTM2016-80313-P, Junta de Andalucia grants no. A-FQM-139-UGR18 and P18-FR-4049

\noindent Marcos Paulo Tassi

\noindent Instituto de Ciências Matemáticas e de Computação,\\ Universidade de São Paulo (Brazil).

\noindent  e-mail: {\tt mtassi@dm.ufscar.br}

\noindent J.A. Gálvez was partially supported by MINECO/FEDER Grant no. MTM2016-80313-P, Junta de Andalucia grants no. A-FQM-139-UGR18 and P18-FR-4049. \\[0.2cm] P. Mira  was partially supported by MINECO/FEDER Grant no. MTM2016-80313-P. \\[0.2cm] M.P. Tassi was partially supported by grant no. 2020/03431-6, São Paulo Research
Foundation (FAPESP).

%
%\vskip 0.2cm
%
%Research partially supported by grant no. 2020/03431-6, São Paulo Research
%Foundation (FAPESP).

\end{document}